\numberwithin{equation}{section}
\newtheorem{theorem}{Theorem}[section]
\newtheorem{definition}[theorem]{Definition}
\newtheorem{lemma}[theorem]{Lemma}
\newtheorem{proposition}[theorem]{Proposition}
\newtheorem{remark}[theorem]{Remark}
\newenvironment{proof}[1][Proof]{\noindent\textbf{#1.} }{\ \rule{0.5em}{0.5em}}
\begin{document}

\author{Abdelhamid AINOUZ \\
%EndAName
Labo. AMNEDP, Dept. of Maths,\\
University of Sciences and Technology, Houari Boumediene,\\
Po Box 32 El Alia Bab Ezzouar, 16111 Algiers, Algeria.\\
ainouz@gmail.com}
\title{Derivation of a Convection Process in a Steady Diffusion-Transfer
Problem by Homogenization\thanks{
Keywords: Homogenization, Two scale convergence, Interfacial thermal barrier.%
\newline
\hspace*{5mm} 2000 Subject Classification: 35B27, 35B40.}}
\maketitle
\date{}

\begin{abstract}
We study the homogenization of a steady diffusion equation in a highly
heterogeneous medium made of two subregions separated by a periodic barrier
through which the flow is proportional to the jump of the temperature by a
layer conductance of the same order of magnitude of the materials in
consideration. The macroscopic governing equations and the effective
conductivity of the homogenized model are obtained by means of the two scale
convergence technique. We show that under some hypothesis the homogenized
systems contain convective terms of order one.
\end{abstract}

\section{Introduction}

Homogenization in multicomponent $\varepsilon -$periodic media with
interfacial barriers has been extensively studied these last years. There
are many mathematical works devoted to the subject and we refer the reader
for instance to Auriault et al\ \cite{ae}, \cite{ar}, Benveniste \cite{ben},
Canon et al. \cite{cp}, Hummel \cite{hummel}, Monsurr\'{o} \cite{mon} and
Pernin \cite{per}\ldots

In \cite{ae}, \cite{ar} the layer conductance or sometimes called the
resistivity is considered as a positive function of order of magnitude $%
\varepsilon ^{\gamma }$ and five distinct macroscopic models are derived by
the formal asymptotic expansion method \cite{blp}, \cite{san}. These
homogenized models are related to five values of $\gamma $ which are $%
-2,-1,0,1$ and $2$. Monsurr\'{o} \cite{mon} considered the same problem and
the derivation of the homogenized models is obtained with the help of the
oscillating test functions method of Tartar \cite{tar}. The case $\gamma
\leq -1$ has been studied by Hummel \cite{hummel} but for media with
disconnected components arranged in a tesselation configuration. He used the
two-scale convergence method by Nguetseng \cite{ngu}, further developed by
Allaire \cite{al} and adapted to periodic surfaces, see for e.g. Allaire et
al \cite{adh} and Neuss-Radu \cite{nr}. The aim of this paper is to consider
in the simplest configuration the case of resistivity having zero average
value on the periodic interface. As in Ainouz \cite{ain} we show that the
homogenized problem contains convective terms.

The paper is organized as follows: in Section 2 we give the geometry of the
medium in which the stationary diffusion micro-model is set. In Section 3
the weak formulation of the problem is studied in a suitable function space
and we give the main a priori estimates. Finally in Section 4, we recall the
two-scale convergence and its main results, which we shall use it to derive
the homogenized problem with the help of two-scale convergence method.

\section{Setting of the Problem and statement of the main result}

We begin this section by describing the geometry of the medium underlying
the exact micro-model for the steady diffusion equation in a two-component
periodic medium.

Let $Y=(0,1)^{n}$ be the unit cell of periodicity and assume that $Y$ is
divided as $Y=Y_{1}\cup Y_{2}\cup \Sigma $ where $Y_{i}$, $i=1,2$ are two
open subsets of $Y$ and $\Sigma =\partial Y_{1}\cap \partial Y_{2}$ is a
sufficiently smooth interface that separates them. The sets $Y_{1}$ and $%
Y_{2}$ are made of two different materials but having conductance of same
order of magnitude and $\Sigma $ is a thin layer of very low conductance
which constitutes a flow exchange barrier.

Let $A_{i}(y)$ ($i=1,2$) denotes the conductivity tensor of the material $%
Y_{i}$. We assume that $A_{i}(y)$ is a $n\times n$ $Y$-periodic
matrix-valued function and continuous on $\mathbb{R}^{n}$ such that
\begin{equation}
m_{i}|\lambda |^{2}\leq A_{i}(y)\lambda \cdot \lambda  \label{1}
\end{equation}%
and
\begin{equation}
A_{i}(y)\lambda \cdot \eta \leq M_{i}|\lambda ||\eta |  \label{2}
\end{equation}%
for a.e. $y\in Y_{i}$ and for all $\lambda ,\eta \in \mathbb{R}^{n},$ where $%
m_{i},\ M_{i}$ are positive constants.

Let $\alpha $ denotes the barrier resistivity of $\Sigma $. \ For simplicity
we suppose that $\alpha $ is a continuous function on $\mathbb{R}^{n}$, $Y-$%
periodic with a zero average-value on $\Sigma $, i.e.
\begin{equation}
\int_{\Sigma }\alpha (y)d\sigma (y)=0.  \label{3}
\end{equation}%
Here $d\sigma (y)$ denotes the surface measure on $\Sigma $. We decompose $%
\alpha $ into its positive and negative parts as follows:%
\begin{equation}
\alpha =\alpha ^{+}-\alpha ^{-},  \label{4}
\end{equation}%
where
\begin{equation*}
\alpha ^{+}=\sup \left( \alpha ,0\right) ,\quad \alpha ^{-}=\sup \left(
-\alpha ,0\right) .
\end{equation*}%
We assume that%
\begin{equation}
\alpha ^{+}\geq \alpha _{0}  \label{5}
\end{equation}%
where $\alpha _{0}$ is a positive real number.

Let $a_{i}\left( y\right) $ ($i=1,2$) be continuous functions on $\mathbb{R}%
^{n}$ and $Y-$periodic such that
\begin{equation}
a_{i}\left( y\right) \geq \eta _{i}>0\text{ for a.e. }y\in Y_{i}.  \label{7}
\end{equation}

Let $\Omega $ be a bounded domain in $\mathbb{R}^{n}$ with sufficiently
smooth boundary $\Gamma $. Let $\varepsilon >0$ be a positive number taking
its value in a sequence of real numbers converging to zero. We consider the
following open subsets of $\Omega $%
\begin{equation}
\Omega _{i}^{\varepsilon }=\{x\in \Omega ;\ \chi _{i}\left( \frac{x}{%
\varepsilon }\right) =1\}\quad i=1,2  \label{8}
\end{equation}%
where $\chi _{i}\left( y\right) $ is the $Y-$periodic characteristic
function of $Y_{i}$.

The $(n-1)$-dimensional surface $\Sigma ^{\varepsilon }$ defined by
\begin{equation}
\Sigma ^{\varepsilon }=\{x\in \Omega ;\ \exists \overrightarrow{k}\in
\mathbb{Z}^{n},\ \frac{x}{\varepsilon }+\overrightarrow{k}\in \Sigma \}
\label{9}
\end{equation}%
is then by construction the interfacial barrier that separates the materials
$\Omega _{1}^{\varepsilon }$ and $\Omega _{2}^{\varepsilon }$, i.e. $\Sigma
^{\varepsilon }=\partial \Omega _{1}^{\varepsilon }\cap \partial \Omega
_{2}^{\varepsilon }$.

Let $\Gamma _{i}^{\varepsilon }=\overline{\Omega _{i}^{\varepsilon }}\cap
\Gamma $ ($i=1,2$) and assume that $|\Gamma _{i}^{\varepsilon }|\neq 0$. Let
$f_{i}^{\varepsilon }\in L^{2}(\Omega _{i}^{\varepsilon })$ ($i=1,2$) be
uniformly bounded functions, that is,
\begin{equation}
||f_{i}^{\varepsilon }||_{0,\Omega _{i}^{\varepsilon }}\leq C  \label{10}
\end{equation}%
where (and throughout this paper) $C$ is a positive constant independent of $%
\varepsilon $.

Let us consider the following transmission problem:%
\begin{equation}
-\text{\textrm{\text{div}}}(A_{i}(\frac{x}{\varepsilon })\nabla
u_{i}^{\varepsilon }(x))+a_{i}(\frac{x}{\varepsilon })u_{i}^{\varepsilon
}(x)=f_{i}^{\varepsilon }\left( x\right) \text{ in }\Omega _{i}^{\varepsilon
},\text{\qquad }i=1,2  \label{11}
\end{equation}%
\begin{equation}
A_{1}(\frac{x}{\varepsilon })\nabla u_{1}^{\varepsilon }(x)\cdot \nu
^{\varepsilon }=A_{2}(\frac{x}{\varepsilon })\nabla u_{2}^{\varepsilon
}(x)\cdot \nu ^{\varepsilon }\text{ on }\Sigma ^{\varepsilon },  \label{12}
\end{equation}

\begin{equation}
A_{1}(\frac{x}{\varepsilon })\nabla u_{1}^{\varepsilon }(x)\cdot \nu
^{\varepsilon }=-\alpha (\frac{x}{\varepsilon })\left( u_{1}^{\varepsilon
}-u_{2}^{\varepsilon }\right) \text{ on }\Sigma ^{\varepsilon },  \label{13}
\end{equation}%
\begin{equation}
u_{i}^{\varepsilon }=0\text{ on }\Gamma _{i}^{\varepsilon }\text{.\qquad }%
i=1,2  \label{14}
\end{equation}%
where $\nu ^{\varepsilon }$ is the unit outward normal to $\Omega
_{1}^{\varepsilon }$ obtained in an obvious way by extending $\nu $, the
unit outward normal to $Y_{1}$, by $Y-$periodicity to the whole space $%
\mathbb{R}^{n}$.

The first equation (\ref{11}) is the classical steady diffusion problem. The
second equation (\ref{12}) expresses the continuity of the flow across the
interfacial contact $\Sigma ^{\varepsilon }$ while the boundary condition (%
\ref{13}) says that the flow is proportional to the jump of the temperature.
For a physical justification of the condition (\ref{13}) we refer the reader
to Carslaw and Jaeger \cite{cj}, Kholodovskii \cite{kh}. Finally the last
condition (\ref{14}) is the well-known Dirichlet boundary condition.

In order to state the main theorem of this paper we begin by setting some
notations. Let us first consider the following microscopic problems:%
\begin{equation}
-\mathrm{div}_{y}(A_{i}(y)(\nabla _{y}\gamma _{i}(y))=0\text{ in }Y_{i},%
\text{\qquad }  \label{15}
\end{equation}%
\begin{equation}
(A_{i}(y)(\nabla _{y}\gamma _{i}(y))\cdot \nu (y)=-\alpha (y)\text{ on }%
\Sigma ,  \label{16}
\end{equation}%
\textbf{\ }%
\begin{equation}
y\longmapsto \gamma _{i}(y)\text{ $Y$-periodic,\qquad }i=1,2,  \label{17}
\end{equation}%
and for $k=1,2,\ldots ,n$ the problem
\begin{equation}
-\mathrm{div}_{y}(A_{i}(y)(e^{k}+\nabla _{y}\xi _{i}^{k}(y))=0\text{ in }%
Y_{i},  \label{18}
\end{equation}%
\begin{equation}
(A_{i}(y)(e^{k}+\nabla _{y}\xi _{i}^{k}(y))\cdot \nu (y)=0\text{ on }\Sigma ,
\label{19}
\end{equation}%
\begin{equation}
y\longmapsto \xi _{i}^{k}(y)\text{ $Y$-periodic,\qquad }i=1,2,\text{.}
\label{20}
\end{equation}%
where $\mathrm{div}_{y}$ (resp.$\nabla _{y}$ ) is the divergence (resp.
gradient) operator with respect to the variable $y$ and $(e^{k})_{1\leq
k\leq n}$\ is the canonical basis of $\mathbb{R}^{n}$.

The well-posedness of these elliptic problems is well-known, see for
instance Bensoussan et al. \cite{blp}. Let us mention that the assumption (%
\ref{3}) is a compatibility condition for the unique solvability of the
problems (\ref{15})-(\ref{17}).

Let us define the effective matrices $A_{i}^{eff}$\ $=(a_{i}^{kj})_{1\leq
k,j\leq n}$ and the convection vectors $B_{i}=(b_{i}^{k})_{1\leq k\leq n}$ ($%
i=1,2$)%
\begin{equation}
a_{i}^{kj}=\int_{Y_{i}}A_{i}(e^{k}+\nabla _{y}\xi _{i}^{k})\cdot
(e^{j}+\nabla _{y}\xi _{i}^{j})dy,\quad  \label{21}
\end{equation}%
\begin{equation}
b_{i}^{k}=(-1)^{i-1}\int_{Y_{i}}A_{i}\nabla \gamma _{i}dy+\int_{\Sigma
}\alpha \xi _{i}^{k}d\sigma (y).  \label{22}
\end{equation}%
Let us denote
\begin{equation}
d=\int_{\Sigma }\alpha (\gamma _{1}-\gamma _{2})d\sigma (y)  \label{23}
\end{equation}%
\textbf{\ }and%
\begin{equation}
c_{i}=d+\int_{Y_{i}}a_{i}dy,\quad g_{i}(x)=\int_{Y_{i}}f_{i}(x,y)dy,\qquad
i=1,2  \label{24}
\end{equation}%
where $f_{i}$ is the two-scale limit of the sequence $(\chi _{i}(\frac{x}{%
\varepsilon })f_{i}^{\varepsilon })_{\varepsilon >0}$.

The main result of the paper is the following theorem

\begin{theorem}
\label{mr}Let $(u_{1}^{\varepsilon },u_{2}^{\varepsilon })$\ be the solution
of the problem (\ref{11})-(\ref{14}). Then up to a subsequence, $(\chi _{1}(%
\frac{x}{\varepsilon })u_{1}^{\varepsilon },\chi _{2}(\frac{x}{\varepsilon }%
)u_{2}^{\varepsilon })_{\varepsilon >0}$\ two-scale converges to $\left(
u_{1},u_{2}\right) \in (H_{0}^{1}(\Omega ))^{2}$\ solution of the
Homogenized Problem:%
\begin{equation}
-\mathrm{div}(A_{1}^{eff}\nabla u_{1})+B_{1}\cdot \nabla u_{1}-B_{2}\cdot
\nabla u_{2}+c_{1}u_{1}-du_{2}=g_{1}\text{ in }\Omega \text{,}  \label{25}
\end{equation}%
\begin{equation}
-\mathrm{div}(A_{2}^{eff}\nabla u_{2})+B_{2}\cdot \nabla u_{2}-B_{1}\cdot
\nabla u_{1}+c_{2}u_{2}-du_{1}=g_{2}\text{ in }\Omega \text{,}  \label{26}
\end{equation}%
\begin{equation}
u_{1}=u_{2}=0\text{ on }\Gamma .  \label{27}
\end{equation}
\end{theorem}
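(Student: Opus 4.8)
The plan is to prove this via the two-scale convergence method, following the standard homogenization program adapted to the two-component medium with an interfacial barrier. Let me sketch the key steps.

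First, I'd establish the a priori estimates. The natural energy space is the product of spaces $V_i^\varepsilon = \{v \in H^1(\Omega_i^\varepsilon) : v = 0 \text{ on } \Gamma_i^\varepsilon\}$, and the weak formulation of (\ref{11})-(\ref{14}) reads: find $(u_1^\varepsilon, u_2^\varepsilon)$ such that for all test pairs $(v_1, v_2)$,
\[
\sum_{i=1}^2 \int_{\Omega_i^\varepsilon} A_i\bigl(\tfrac{x}{\varepsilon}\bigr)\nabla u_i^\varepsilon \cdot \nabla v_i \, dx + \sum_{i=1}^2 \int_{\Omega_i^\varepsilon} a_i\bigl(\tfrac{x}{\varepsilon}\bigr) u_i^\varepsilon v_i \, dx + \int_{\Sigma^\varepsilon} \alpha\bigl(\tfrac{x}{\varepsilon}\bigr)(u_1^\varepsilon - u_2^\varepsilon)(v_1 - v_2)\, d\sigma = \sum_{i=1}^2 \int_{\Omega_i^\varepsilon} f_i^\varepsilon v_i\, dx.
\]

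\noindent Here I foresee the main obstacle: since $\alpha$ has \emph{zero average} and is sign-changing, the surface term is \emph{not} coercive, so the usual energy bound is not immediate. I would exploit hypothesis (\ref{5}), $\alpha^+ \ge \alpha_0 > 0$, writing $\alpha = \alpha^+ - \alpha^-$, to extract coercivity from the $\alpha^+$ part together with the volume terms $\int a_i |u_i^\varepsilon|^2$ (which are coercive by (\ref{7})), and then control the indefinite $\alpha^-$ contribution using a scaled trace inequality on the periodic surface $\Sigma^\varepsilon$ (of the form $\|v\|_{L^2(\Sigma^\varepsilon)}^2 \le C(\varepsilon^{-1}\|v\|_{L^2(\Omega_i^\varepsilon)}^2 + \varepsilon\|\nabla v\|_{L^2(\Omega_i^\varepsilon)}^2)$) absorbed into the $H^1$ and $L^2$ norms. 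Testing with $(u_1^\varepsilon, u_2^\varepsilon)$ and using (\ref{10}) then yields the uniform bounds $\|\chi_i(\tfrac{x}{\varepsilon})\nabla u_i^\varepsilon\|_{0,\Omega} \le C$ and $\|\chi_i(\tfrac{x}{\varepsilon}) u_i^\varepsilon\|_{0,\Omega} \le C$.

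Next, by the compactness theorems for two-scale convergence (Nguetseng--Allaire, and the boundary variant of Allaire--Damlamian--Hornung and Neuss-Radu), these bounds give, up to a subsequence, two-scale limits $u_i(x) \in H_0^1(\Omega)$ together with correctors: $\chi_i(\tfrac{x}{\varepsilon})\nabla u_i^\varepsilon$ two-scale converges to $\chi_i(y)(\nabla u_i(x) + \nabla_y \hat u_i(x,y))$ for some $\hat u_i \in L^2(\Omega; H^1_{\mathrm{per}}(Y_i)/\mathbb{R})$, and the traces $u_i^\varepsilon|_{\Sigma^\varepsilon}$ two-scale converge on the surface. I would then insert oscillating test functions of the form $v_i(x) = \varphi_i(x) + \varepsilon \psi_i(x)\phi_i(\tfrac{x}{\varepsilon})$, with $\varphi_i \in \mathcal{D}(\Omega)$ and $\phi_i$ $Y$-periodic, into the weak formulation and pass to the two-scale limit. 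Choosing $\varphi_i = 0$ isolates the cell problems and identifies $\hat u_i = \sum_k \partial_{x_k} u_i \, \xi_i^k + (u_1 - u_2)\gamma_i$, where $\xi_i^k$ and $\gamma_i$ solve (\ref{15})-(\ref{20}); the key point, as in Ainouz \cite{ain}, is that the jump term $(u_1 - u_2)$ acts as a source in the $\gamma_i$ cell problem, and the zero-average condition (\ref{3}) is exactly the solvability (Fredholm) condition making $\gamma_i$ well-defined.

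Finally, choosing $\psi_i = 0$ (i.e. $v_i = \varphi_i$) and passing to the limit, the surface integral $\int_{\Sigma^\varepsilon}\alpha(u_1^\varepsilon - u_2^\varepsilon)(\varphi_1 - \varphi_2)$ converges via two-scale surface convergence to $\int_\Omega \int_\Sigma \alpha(y)\bigl(\hat u_1 - \hat u_2 + (u_1-u_2)\cdot 0\bigr)\ldots$; substituting the corrector expressions and integrating by parts in $y$ produces precisely the convection vectors $B_i$ of (\ref{22}) (the $\int_{Y_i} A_i \nabla\gamma_i$ and $\int_\Sigma \alpha \xi_i^k$ contributions) and the coupling constant $d$ of (\ref{23}), while the diffusion term yields $A_i^{\mathrm{eff}}$ from (\ref{21}) and the zeroth-order terms assemble into $c_i$ and $g_i$ from (\ref{24}). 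Collecting the coefficients of $\varphi_1$ and $\varphi_2$ separately gives the two coupled weak equations equivalent to (\ref{25})-(\ref{27}). Well-posedness of the limit system (hence convergence of the full sequence, not merely a subsequence) would follow from Lax--Milgram applied to the limit bilinear form, whose coercivity is inherited from the uniform estimates. I expect the delicate bookkeeping to lie in Step two (non-coercive surface term) and in correctly identifying the first-order convective terms in the last step, since these are precisely the nonstandard features produced by the zero-average resistivity.
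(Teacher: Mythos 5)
Your overall route --- coercivity via the scaled trace inequality with the $\alpha^-$ part absorbed into the volume terms (this needs the standing smallness assumption (\ref{34}), which you use implicitly), two-scale compactness, oscillating test functions $\varphi_i+\varepsilon\Phi_i$, and the corrector ansatz $U_i=\sum_k \xi_i^k\,\partial_{x_k}u_i+\gamma_i\,(u_1-u_2)$ feeding into $A_i^{eff}$, $B_i$ and $d$ --- is the same as the paper's. But there is one genuine gap, and it sits at exactly the nonstandard point of the paper. You claim that the \emph{unscaled} interfacial term $\int_{\Sigma^\varepsilon}\alpha(\frac{x}{\varepsilon})(u_1^\varepsilon-u_2^\varepsilon)(\varphi_1-\varphi_2)\,d\sigma^\varepsilon$ ``converges via two-scale surface convergence'' to the corrector expression. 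Two-scale convergence on periodic surfaces (Definition \ref{d1}, part 2, and Theorem \ref{l2}) only addresses integrals carrying the factor $\varepsilon$; it identifies the limit of $\varepsilon\int_{\Sigma^\varepsilon}\cdots$, which sees the limits $u_i$ and knows nothing of the correctors $U_i$. Without the $\varepsilon$ factor the integral is not even a priori bounded: $|\Sigma^\varepsilon|\sim\varepsilon^{-1}$, and by (\ref{35}) the jump is bounded only in unweighted $L^2(\Sigma^\varepsilon)$, so Cauchy--Schwarz gives merely $O(\varepsilon^{-1/2})$. The limit you wrote down (with the correctors appearing and the $(u_1-u_2)$ contribution killed by the zero mean of $\alpha$) is a strictly stronger statement that requires a separate argument.

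The paper supplies it as Lemma \ref{lem2}: for $\varphi_i(x,y)$ with $\int_\Sigma\varphi_i\,d\sigma(y)=0$ --- and here, crucially, $y\mapsto\alpha(y)(\varphi_1(x)-\varphi_2(x))$ has zero average precisely thanks to (\ref{3}) --- one constructs a $Y$-periodic, $y$-divergence-free auxiliary field $\phi_i(x,\cdot)$ with normal trace $(-1)^{i-1}\varphi_i$ on $\Sigma$ (solvable exactly because of the zero average), rewrites the surface integral as volume integrals over $\Omega_i^\varepsilon$ via the divergence theorem, passes to the two-scale limit there using (\ref{43})--(\ref{44}), and integrates back by parts in $y$ to recover $\int_\Omega\int_\Sigma U_i\,\varphi_i\,d\sigma(y)\,dx$; the $\nabla u_i$ contribution cancels against the $\mathrm{div}_x\phi_i$ term since $u_i\in H_0^1(\Omega)$. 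In your proposal the hypothesis (\ref{3}) plays only its first role, as the Fredholm condition for the cell problem defining $\gamma_i$; its second, more delicate role --- enabling this duality argument --- is missing, and without it the passage to the limit in the unscaled surface term, hence the appearance of the $\int_\Sigma\alpha\,\xi_i^k\,d\sigma$ part of $B_i$ in (\ref{22}) and of $d$ in (\ref{23}), is unjustified. A minor separate point: your closing claim that the whole sequence converges by well-posedness of the limit system exceeds the theorem (which asserts only subsequential convergence), and coercivity of the limit bilinear form, with its sign-changing zero-order coupling and first-order convective terms, is asserted rather than established; the paper does not attempt this either.
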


\section{Solvability of the problem and a priori estimates}

Let us introduce the Hilbert space $V^{\varepsilon }=H^{1}\left( \Omega
_{1}^{\varepsilon },\Gamma _{1}^{\varepsilon }\right) \times H^{1}\left(
\Omega _{2}^{\varepsilon },\Gamma _{2}^{\varepsilon }\right) $ where
\begin{equation}
H^{1}\left( \Omega _{i}^{\varepsilon },\Gamma _{i}^{\varepsilon }\right)
=\{v_{i}\in H^{1}\left( \Omega _{i}^{\varepsilon }\right) ;v_{i}=0\text{ on }%
\Gamma _{i}^{\varepsilon }\}.\qquad i=1,2  \label{28}
\end{equation}%
$V^{\varepsilon }$ is equipped with the norm:
\begin{equation}
||(v_{1},v_{2})||_{V^{\varepsilon }}^{2}:=||v_{1}||_{1,\Omega
_{1}^{\varepsilon }}^{2}+||v_{2}||_{1,\Omega _{2}^{\varepsilon
}}^{2}+||v_{1}-v_{2}||_{0,\Sigma ^{\varepsilon }}^{2}\text{.}  \label{29}
\end{equation}

The equivalent weak formulation of the micro-model (\ref{11})-(\ref{14}) is :%
\begin{equation}
\begin{array}{l}
\text{For each }\varepsilon >0\text{, find }(u_{1}^{\varepsilon
},u_{2}^{\varepsilon })\in V^{\varepsilon }\text{ such that} \\
a^{\varepsilon }((u_{1}^{\varepsilon },u_{2}^{\varepsilon
}),(v_{1},v_{2}))=L^{\varepsilon }((v_{1},v_{2}))\text{ for all }%
(v_{1},v_{2})\in V^{\varepsilon }%
\end{array}
\label{30}
\end{equation}%
where for all $(w_{1},w_{2}),(v_{1},v_{2})\in V^{\varepsilon }$%
\begin{eqnarray}
a^{\varepsilon }((w_{1},w_{2}),(v_{1},v_{2})) &=&\int_{\Omega }(A(\frac{x}{%
\varepsilon })\nabla w^{\varepsilon }\cdot \nabla v^{\varepsilon }+a(\frac{x%
}{\varepsilon })w^{\varepsilon }v^{\varepsilon })dx+  \notag \\
&&+\int_{\Sigma ^{\varepsilon }}\alpha (\frac{x}{\varepsilon }%
)(w_{1}-w_{2})(v_{1}-v_{2})d\sigma ^{\varepsilon }(x),  \label{31}
\end{eqnarray}%
\begin{equation}
L^{\varepsilon }((v_{1},v_{2}))=\left( \int_{\Omega }f^{\varepsilon
}(x)v^{\varepsilon }(x)dx\right) .\   \label{32}
\end{equation}%
Here $d\sigma ^{\varepsilon }(x)$ denotes the surface measure on $\Sigma
^{\varepsilon }$ and $A,w^{\varepsilon },v^{\varepsilon }$, $f^{\varepsilon
} $ are defined by%
\begin{equation*}
A(y)=\chi _{1}(y)A_{1}(y)+\chi _{2}(y)A_{2}(y),\ y\in \mathbb{R}^{n}\text{,}
\end{equation*}%
\begin{eqnarray*}
w^{\varepsilon } &=&\chi _{1}(\frac{x}{\varepsilon })w_{1}(x)+\chi _{2}(%
\frac{x}{\varepsilon })w_{2}(x),\ x\in \Omega , \\
v^{\varepsilon } &=&\chi _{1}(\frac{x}{\varepsilon })v_{1}(x)+\chi _{2}(%
\frac{x}{\varepsilon })v_{2}(x),\ x\in \Omega , \\
f^{\varepsilon } &=&\chi _{1}(\frac{x}{\varepsilon })f_{1}^{\varepsilon
}(x)+\chi _{2}(\frac{x}{\varepsilon })f_{2}^{\varepsilon }(x),x\in \Omega .
\end{eqnarray*}%
To study the solvability of the problem (\ref{30}) we shall use the
following inequality:

\begin{lemma}
\label{l1}There exists a constant $C_{i}>0$ independent of $\varepsilon $
such that for every $v_{i}\in H^{1}\left( \Omega _{i}^{\varepsilon }\right) $
and for all $\delta _{i}>0$ we have
\begin{equation}
||v_{i}||_{0,\Sigma ^{\varepsilon }}^{2}\leq C_{i}(\frac{1}{\delta
_{i}\varepsilon }||v_{i}||_{0,\Omega _{i}^{\varepsilon }}^{2}+\delta
_{i}\varepsilon ||\nabla v_{i}||_{0,\Omega _{i}^{\varepsilon }}^{2})\text{%
.\qquad }i=1,2  \label{33}
\end{equation}
\end{lemma}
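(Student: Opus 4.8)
The plan is to reduce the estimate to the fixed reference cell $Y_{i}$, prove it there by the classical trace theorem, and then transport it to the perforated domain $\Omega _{i}^{\varepsilon }$ by the periodic rescaling $x\mapsto x/\varepsilon $ followed by summation over the $\varepsilon $-cells. Since $\delta _{i}$ enters the statement only through the products $\delta _{i}\varepsilon $ and $(\delta _{i}\varepsilon )^{-1}$, I expect the parameter to be produced by a single Young inequality applied at the reference scale, after which the $\varepsilon $-bookkeeping is purely mechanical.

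First I would record the estimate on $Y_{i}$. As $Y_{i}$ is a bounded Lipschitz domain and $\Sigma \subset \partial Y_{i}$ is smooth, the trace map $H^{1}(Y_{i})\to L^{2}(\Sigma )$ is continuous, and in its multiplicative (interpolation) form it furnishes a constant $c_{i}>0$ with
\[
\int_{\Sigma }|w|^{2}d\sigma \leq c_{i}\left( ||w||_{0,Y_{i}}^{2}+||w||_{0,Y_{i}}\,||\nabla w||_{0,Y_{i}}\right) ,\qquad w\in H^{1}(Y_{i}).
\]
Applying Young's inequality $ab\leq \tfrac{1}{2\delta _{i}}a^{2}+\tfrac{\delta _{i}}{2}b^{2}$ to the cross term and absorbing the baseline $||w||_{0,Y_{i}}^{2}$ into the $\tfrac{1}{\delta _{i}}$-term (enlarging $c_{i}$) gives, for every $\delta _{i}$ in a fixed range $(0,\delta _{0}]$,
\[
\int_{\Sigma }|w|^{2}d\sigma \leq C_{i}\left( \frac{1}{\delta _{i}}||w||_{0,Y_{i}}^{2}+\delta _{i}||\nabla w||_{0,Y_{i}}^{2}\right) .
\]
This is all the a priori estimates later require, $\delta _{i}$ being chosen small there; I would remark that the bound cannot hold for arbitrarily large $\delta _{i}$ (constants already violate it), so ``$\delta _{i}>0$'' is to be read as ``$\delta _{i}$ small''.

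Next I would rescale cell by cell. For each $k\in \mathbb{Z}^{n}$ with $\varepsilon (k+Y_{i})\cap \Omega \neq \emptyset $, I apply the previous inequality to $w(y)=v_{i}(\varepsilon (y+k))$ and change variables back to $x=\varepsilon (y+k)$: the surface measure scales by $\varepsilon ^{n-1}$, the volume measure by $\varepsilon ^{n}$, and $\nabla _{y}w=\varepsilon \,\nabla _{x}v_{i}$. Collecting the powers of $\varepsilon $ turns the displayed cell inequality into
\[
||v_{i}||_{0,\varepsilon (k+\Sigma )}^{2}\leq C_{i}\left( \frac{1}{\delta _{i}\varepsilon }||v_{i}||_{0,\varepsilon (k+Y_{i})}^{2}+\delta _{i}\varepsilon \,||\nabla v_{i}||_{0,\varepsilon (k+Y_{i})}^{2}\right) ,
\]
in which the factors $(\delta _{i}\varepsilon )^{-1}$ and $\delta _{i}\varepsilon $ appear exactly as in the statement. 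Since every quantity here is a squared $L^{2}$-norm over a cell, these contributions are additive over the disjoint cells; summing over all admissible $k$ and using $\Sigma ^{\varepsilon }=\bigcup _{k}\big(\varepsilon (k+\Sigma )\cap \Omega \big)$ and $\Omega _{i}^{\varepsilon }=\bigcup _{k}\big(\varepsilon (k+Y_{i})\cap \Omega \big)$ yields (\ref{33}) with $C_{i}$ depending only on $c_{i}$ and the geometry.

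The step I expect to be the main obstacle is the treatment of the cells meeting the outer boundary $\Gamma $: there $\varepsilon (k+Y_{i})$ is truncated by $\Omega $, and one must know that the reference-cell trace inequality still holds on these partial cells with a constant independent of $\varepsilon $ and $k$. I would resolve this using the smoothness assumed on $\Gamma $ (and on $\Sigma $), which guarantees that, up to translation and for $\varepsilon $ small, only finitely many truncated cell-geometries occur so that a uniform trace constant is available; alternatively one first extends $v_{i}$ across $\Gamma $ to a fixed neighbourhood and argues on whole cells. Everything else is routine scaling and the elementary Young inequality.
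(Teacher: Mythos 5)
The paper does not actually prove this lemma: its ``proof'' is a citation to \cite{ain} and \cite{mon}, plus the remark that $C_{i}$ depends only on the geometry of $Y_{i}$. What you wrote is precisely the standard argument behind those citations, and your bookkeeping is correct: the multiplicative trace inequality on the reference cell $Y_{i}$, a Young inequality to create the parameter, then the rescaling $x=\varepsilon (y+k)$ under which the surface measure scales like $\varepsilon ^{n-1}$, the volume measure like $\varepsilon ^{n}$ and the gradient picks up a factor $\varepsilon $, followed by summation over disjoint cells; this reproduces exactly the factors $(\delta _{i}\varepsilon )^{-1}$ and $\delta _{i}\varepsilon $ in (\ref{33}). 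The boundary-cell issue you flag is real and is silently glossed over both in this paper and in the cited works; your claim that ``only finitely many truncated cell-geometries occur'' is not literally true (the truncations by $\Gamma $ vary continuously with $k$ and $\varepsilon $), but a uniform trace constant for the cut cells can be obtained from the uniform Lipschitz character provided by the smoothness of $\Gamma $, or by your alternative extension argument, so this is a repairable technicality rather than a gap.

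Your second caveat --- that (\ref{33}) cannot hold for all $\delta _{i}>0$ and must be read with $\delta _{i}$ confined to a bounded range $(0,\delta _{0}]$, with $C_{i}$ depending on $\delta _{0}$ --- is correct and is the best possible: testing with $v_{i}\equiv 1$ (admissible, since no boundary condition is imposed) makes the right-hand side of (\ref{33}) tend to $0$ as $\delta _{i}\to \infty $ at fixed $\varepsilon $, so the lemma as printed is slightly false. Be aware that this is not mere pedantry here: in the coercivity argument leading to (\ref{38})--(\ref{39}) the author chooses $\delta _{i}$ of order $\varepsilon ^{-1}$, so that $\delta _{i}\varepsilon $ is of order one and $\gamma _{i}>0$; in that regime the same constant-function test shows the left side of (\ref{33}) grows like $\varepsilon ^{-1}$ while the right side stays bounded, so neither your proof nor any proof can cover it. Your restriction therefore pinpoints a genuine weak point in how the lemma is used downstream; as a proof of the lemma in its correct bounded-$\delta _{i}$ reading, your argument is complete in outline and coincides with the intended one.
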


\begin{proof}
See for instance Ainouz \cite{ain} (see also Monsurr\'{o} \cite{mon}). Note
that the constants $C_{i}$ depends solely on the geometry of $Y_{i}$.\hfill
\end{proof}

Throughout this paper we assume that
\begin{equation}
||\alpha ^{-}||_{\infty ,\Sigma ^{\varepsilon }}\leq \ \underset{i=1,2}{\min
}\frac{\sqrt{m_{i}\eta _{i}}}{C_{i}}.  \label{34}
\end{equation}%
where $C_{i}$ are the constants defined in Lemma \ref{l1}. Now we give the
following existence result :

\begin{proposition}
Let the assumption (\ref{34}) be fulfilled. Then for any fixed $\varepsilon
>0$, there exists a unique couple $(u_{1}^{\varepsilon },u_{2}^{\varepsilon
})\in V^{\varepsilon }$ solution of (\ref{30}). Moreover we have the a
priori estimate
\begin{equation}
||(u_{1}^{\varepsilon },u_{2}^{\varepsilon })||_{V^{\varepsilon }}\leq C.
\label{35}
\end{equation}
\end{proposition}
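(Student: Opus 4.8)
The plan is to obtain existence and uniqueness from the Lax--Milgram theorem applied to the bilinear form $a^\varepsilon$ and the linear form $L^\varepsilon$ on the Hilbert space $V^\varepsilon$, the decisive point being to track that every constant produced is independent of $\varepsilon$, so that the coercivity constant can be recycled into the a priori bound (\ref{35}).

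First I would dispatch the two continuity statements. For $L^\varepsilon$, Cauchy--Schwarz and the uniform bound (\ref{10}) give
\[
|L^\varepsilon((v_1,v_2))| \le \|f^\varepsilon\|_{0,\Omega}\,\|v^\varepsilon\|_{0,\Omega}\le C\,\|(v_1,v_2)\|_{V^\varepsilon},
\]
since $\|v^\varepsilon\|_{0,\Omega}^2=\sum_i\|v_i\|_{0,\Omega_i^\varepsilon}^2\le\|(v_1,v_2)\|_{V^\varepsilon}^2$. Continuity of $a^\varepsilon$ follows from (\ref{2}) together with the boundedness of the continuous periodic functions $a_i$ and $\alpha$; here it is essential that the norm (\ref{29}) carries the interface term $\|v_1-v_2\|_{0,\Sigma^\varepsilon}^2$, as it is exactly this quantity that absorbs the surface integral $\int_{\Sigma^\varepsilon}\alpha(w_1-w_2)(v_1-v_2)\,d\sigma^\varepsilon$, yielding a bound $C\|(w_1,w_2)\|_{V^\varepsilon}\|(v_1,v_2)\|_{V^\varepsilon}$ with $C$ depending only on $\max_i M_i$, $\|a\|_\infty$ and $\|\alpha\|_\infty$.

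The heart of the matter, and the main obstacle, is $\varepsilon$-uniform coercivity, where the sign-changing $\alpha$ threatens everything. Taking $(w_1,w_2)=(v_1,v_2)$ and using the ellipticity (\ref{1}) and the lower bound (\ref{7}),
\[
a^\varepsilon((v_1,v_2),(v_1,v_2))\ge \sum_{i=1,2}\Big(m_i\|\nabla v_i\|_{0,\Omega_i^\varepsilon}^2+\eta_i\|v_i\|_{0,\Omega_i^\varepsilon}^2\Big)+\int_{\Sigma^\varepsilon}\alpha^+(v_1-v_2)^2\,d\sigma^\varepsilon-\int_{\Sigma^\varepsilon}\alpha^-(v_1-v_2)^2\,d\sigma^\varepsilon.
\]
The $\alpha^+$ term is nonnegative by (\ref{5}) and in fact controls $\|v_1-v_2\|_{0,\Sigma^\varepsilon}^2$ from below; the $\alpha^-$ term has the wrong sign and must be absorbed into the volume norms. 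I would bound it by $\|\alpha^-\|_{\infty,\Sigma^\varepsilon}$ times the surface norms of $v_1$ and $v_2$ and then invoke Lemma \ref{l1}. Writing the free parameter as $t_i=\delta_i\varepsilon$ turns the factors $1/(\delta_i\varepsilon)$ and $\delta_i\varepsilon$ into the $\varepsilon$-independent quantities $t_i^{-1}$ and $t_i$, so the offending term is dominated by $\sum_i \|\alpha^-\|_\infty C_i\big(t_i^{-1}\|v_i\|_{0,\Omega_i^\varepsilon}^2+t_i\|\nabla v_i\|_{0,\Omega_i^\varepsilon}^2\big)$. I can choose $t_i$ in the range $[\,\|\alpha^-\|_\infty C_i/\eta_i,\ m_i/(\|\alpha^-\|_\infty C_i)\,]$, which is nonempty precisely when $\|\alpha^-\|_{\infty,\Sigma^\varepsilon}C_i\le\sqrt{m_i\eta_i}$, i.e. under hypothesis (\ref{34}); this leaves a positive multiple of $\sum_i\|v_i\|_{1,\Omega_i^\varepsilon}^2$, while the $\alpha^+$ term supplies, via (\ref{5}), the interface part $\|v_1-v_2\|_{0,\Sigma^\varepsilon}^2$ of the norm. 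Hence $a^\varepsilon((v_1,v_2),(v_1,v_2))\ge c_0\|(v_1,v_2)\|_{V^\varepsilon}^2$ with $c_0>0$ independent of $\varepsilon$.

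With continuity and $\varepsilon$-uniform coercivity established, Lax--Milgram delivers the unique $(u_1^\varepsilon,u_2^\varepsilon)\in V^\varepsilon$ solving (\ref{30}). Finally, testing (\ref{30}) with $(u_1^\varepsilon,u_2^\varepsilon)$ and combining coercivity with the continuity of $L^\varepsilon$ gives $c_0\|(u_1^\varepsilon,u_2^\varepsilon)\|_{V^\varepsilon}^2\le L^\varepsilon((u_1^\varepsilon,u_2^\varepsilon))\le C\|(u_1^\varepsilon,u_2^\varepsilon)\|_{V^\varepsilon}$, whence (\ref{35}) with the explicit bound $C/c_0$, uniform in $\varepsilon$ because both $c_0$ and $C$ are.
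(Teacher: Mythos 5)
Your proof is correct and follows essentially the same route as the paper: the same decomposition $\alpha=\alpha^{+}-\alpha^{-}$, absorption of the $\alpha^{-}$ surface term into the volume norms via Lemma \ref{l1}, and a choice of the free parameter making $\delta_{i}\varepsilon$ (your $t_{i}$) independent of $\varepsilon$ so that hypothesis (\ref{34}) yields an $\varepsilon$-uniform coercivity constant, followed by Lax--Milgram and the standard energy estimate for (\ref{35}). Your interval formulation for $t_{i}$ even makes transparent why (\ref{34}) is exactly the right condition, matching the paper's explicit choice of $\delta_{i}$ in spirit (with the same harmless imprecision at the equality case of (\ref{34})).
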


\begin{proof}
First we show that $a^{\varepsilon }(\cdot ,\cdot )$ is coercive on $%
V^{\varepsilon }$. Let $(v_{1},v_{2})\in V^{\varepsilon }$, then
\begin{equation}
\left.
\begin{array}{c}
a^{\varepsilon }((v_{1},v_{2}),(v_{1},v_{2}))\geq \sum_{i=1,2}(m_{i}||\nabla
v_{i}||_{0,\Omega _{i}^{\varepsilon }}^{2}+\eta _{i}||v_{i}||_{0,\Omega
_{i}^{\varepsilon }}^{2}) \\
+(\alpha _{0}-||\alpha ^{-}||_{\infty ,\Sigma ^{\varepsilon }})\int_{\Sigma
^{\varepsilon }}\left( v_{1}(x)-v_{2}(x)\right) ^{2}d\sigma ^{\varepsilon
}(x).%
\end{array}%
\right.  \label{36}
\end{equation}%
But in view of Lemma \ref{l1} we see that
\begin{equation*}
\int_{\Sigma ^{\varepsilon }}\left( v_{1}(x)-v_{2}(x)\right) ^{2}d\sigma
^{\varepsilon }(x)\leq \sum_{i=1,2}C_{i}(\frac{1}{\delta _{i}\varepsilon }%
||v_{i}||_{0,\Omega _{i}^{\varepsilon }}^{2}+\delta _{i}\varepsilon ||\nabla
v_{i}||_{0,\Omega _{i}^{\varepsilon }}^{2}).
\end{equation*}%
Set
\begin{equation*}
\beta _{i}=m_{i}-C_{i}\delta _{i}\varepsilon ||\alpha ^{-}||_{\infty ,\Sigma
^{\varepsilon }},\ \gamma _{i}:=\eta _{i}-\frac{C_{i}}{\delta
_{i}\varepsilon }||\alpha ^{-}||_{\infty ,\Sigma ^{\varepsilon }}\text{.}
\end{equation*}%
Therefore (\ref{36}) becomes
\begin{eqnarray}
a^{\varepsilon }((v_{1},v_{2}),(v_{1},v_{2})) &\geq &\sum_{i=1,2}\left(
\beta _{i}||\nabla v_{i}||_{0,\Omega _{i}^{\varepsilon }}^{2}++\gamma
_{i}||v_{i}||_{0,\Omega _{i}^{\varepsilon }}^{2}\right)  \notag \\
&&+\alpha _{0}||v_{1}-v_{2}||_{0,\Sigma ^{\varepsilon }}^{2}.  \label{37}
\end{eqnarray}%
Now we choose, for example, $\delta _{i}=\dfrac{2\varepsilon
^{-1}C_{i}m_{i}||\alpha ^{-}||_{\infty ,\Sigma ^{\varepsilon }}}{\eta
_{i}m_{i}+(C_{i}||\alpha ^{-}||_{\infty ,\Sigma ^{\varepsilon }})^{2}},\quad
i=1,2$. Then by (\ref{34})\ we have%
\begin{equation}
\beta _{i}=\frac{m_{i}(\eta _{i}m_{i}-(C_{i}||\alpha ^{-}||_{\infty ,\Sigma
^{\varepsilon }})^{2})}{\eta _{i}m_{i}+(C_{i}||\alpha ^{-}||_{\infty ,\Sigma
^{\varepsilon }})^{2}}>0,  \label{38}
\end{equation}%
and%
\begin{equation}
\gamma _{i}=\frac{\eta _{i}m_{i}-(C_{i}||\alpha ^{-}||_{\infty ,\Sigma
^{\varepsilon }})^{2}}{2C_{i}m_{i}||\alpha ^{-}||_{\infty ,\Sigma
^{\varepsilon }}}>0.  \label{39}
\end{equation}%
Hence by (\ref{38}) and (\ref{39}), the inequality (\ref{37}) becomes
\begin{equation}
a^{\varepsilon }((v_{1},v_{2}),(v_{1},v_{2}))\geq
c_{0}||(v_{1},v_{2})||_{V^{\varepsilon }}^{2}  \label{40}
\end{equation}%
where
\begin{equation*}
c_{0}=\min (\beta _{1},\beta _{2},\gamma _{1},\gamma _{2},\alpha _{0})>0
\end{equation*}%
which is independent of $\varepsilon $.

It is easy to observe that $a^{\varepsilon }((w_{1},w_{2}),(v_{1},v_{2}))$
is bilinear continuous on $V^{\varepsilon }\times V^{\varepsilon }$ and that
$L^{\varepsilon }((v_{1},v_{2}))$ is linear and continuous on $%
V^{\varepsilon }$. Consequently by the Lax-Milgram Lemma the problem (\ref%
{30}) has a unique solution $(u_{1}^{\varepsilon },u_{2}^{\varepsilon })$ in
$V^{\varepsilon }$. Furthermore we have%
\begin{equation*}
||(u_{1}^{\varepsilon },u_{2}^{\varepsilon })||_{V^{\varepsilon }}^{2}\leq
\frac{1}{c_{0}}L^{\varepsilon }((u_{1}^{\varepsilon },u_{2}^{\varepsilon
}))\leq \frac{1}{c_{0}}\underset{i=1,2}{\sum }||f_{i}^{\varepsilon
}||_{0,\Omega _{i}^{\varepsilon }}\ ||u_{i}^{\varepsilon }||_{0,\Omega
_{i}^{\varepsilon }}
\end{equation*}%
which implies by (\ref{10}) that the sequence $(u_{1}^{\varepsilon
},u_{2}^{\varepsilon })_{\varepsilon >0}$ is uniformly bounded in $%
V^{\varepsilon }$. The Proposition is then proved.\hfill
\end{proof}

In view of the a priori estimate (\ref{35}), one is interested in
investigating the limit of the sequence $((u_{1}^{\varepsilon
},u_{2}^{\varepsilon }))_{\varepsilon >0}$ as $\varepsilon \rightarrow 0$ in
a sense that will be specified later. This is the purpose of the next
Section.

\section{two-scale convergence process}

In this Section we shall use the two-scale convergence method to determine
the Homogenization of the Problem (\ref{11})-(\ref{14}). For more details on
two-scale convergence, we refer the reader to Nguesteng \cite{ngu}, Allaire
\cite{al} and a recent paper by Lukkassen et al. \cite{lnw}. We also mention
the work by Allaire et al. \cite{adh} (see also Ainouz \cite{ainthesis},
Neuss-radu \cite{nr}) where the two-scale convergence is applied to periodic
surfaces. For convenience we recall the definition and the main compactness
results of this method.

\begin{definition}
\label{d1}\qquad

\begin{enumerate}
\item \label{d11}Let $(v^{\varepsilon })_{\varepsilon >0}$ be a sequence in $%
L^{2}(\Omega )$ and $v_{0}\in L^{2}(\Omega \times Y)$. We say that $%
(v^{\varepsilon })_{\varepsilon >0}$ two-scale converges\ to $v_{0}$ if for
every\textbf{\ }$\varphi \in \mathcal{D(}\Omega ;\mathcal{C}_{per}^{\infty
}\left( Y\right) )$ we have%
\begin{equation}
\underset{\varepsilon \rightarrow 0}{\lim }\int_{\Omega }v^{\varepsilon
}(x)\varphi (x,\frac{x}{\varepsilon })dx=\int_{\Omega \times
Y}v_{0}(x,y)\varphi (x,y)dxdy\text{.}  \label{41}
\end{equation}

\item \label{d12}Let $(v^{\varepsilon })_{\varepsilon >0}$ be a sequence in $%
L^{2}(\Sigma ^{\varepsilon })$ and $v_{0}\in L^{2}(\Omega \times \Sigma )$.
We say that $(v^{\varepsilon })_{\varepsilon >0}$ two-scale converges\ to $%
v_{0}$ if for every\textbf{\ }$\varphi \in \mathcal{D(}\overline{\Omega };%
\mathcal{C}_{per}^{\infty }\left( Y\right) )$ we have%
\begin{equation}
\underset{\varepsilon \rightarrow 0}{\lim }\varepsilon \int_{\Sigma
^{\varepsilon }}v^{\varepsilon }(x)\varphi (x,\frac{x}{\varepsilon })d\sigma
^{\varepsilon }(x)=\int_{\Omega \times \Sigma }v_{0}(x,y)\varphi
(x,y)dxd\sigma (y)\text{.}  \label{41p}
\end{equation}
\end{enumerate}
\end{definition}

We now give the main compactness results of the two scale convergence :

\begin{theorem}
\label{l2}\

\begin{enumerate}
\item Let $(v^{\varepsilon })_{\varepsilon >0}$ be a uniformly bounded
sequence in $L^{2}(\Omega )$. Then one can extract a subsequence which
two-scale converges to a function $v_{0}\in L^{2}(\Omega \times Y)$ in the
sense of Definition \ref{d1}\ref{d11}.

\item Let $(\sqrt{\varepsilon }v^{\varepsilon })_{\varepsilon >0}$ be a
uniformly bounded sequence in $L^{2}(\Sigma ^{\varepsilon })$. Then one can
extract a subsequence which two-scale converges to a function $w_{0}\in $ $%
L^{2}(\Omega \times \Sigma )$.

\item If $(v^{\varepsilon })_{\varepsilon >0}$ is uniformly bounded in $%
H^{1}(\Omega )$ (resp. $H_{0}^{1}(\Omega )$) then one can extract a
subsequence still denoted $(v^{\varepsilon })_{\varepsilon >0}$ and there
exist $v\in H^{1}(\Omega )$ (resp. $H_{0}^{1}(\Omega )$) and $V\in
L^{2}(\Omega ;H_{per}^{1}(Y)/\mathbb{R})$ such that

\begin{description}
\item[i)] $(v^{\varepsilon })_{\varepsilon >0}$ two-scale converges to $v$
in the sense of Definition \ref{d1}\ref{d11}.

\item[ii)] $(\nabla v^{\varepsilon })_{\varepsilon >0}$ two-scale converges
to $\nabla v+\nabla _{y}V$ in the sense of Definition \ref{d1}\ref{d11}.

\item[iii)] Moreover $(\sqrt{\varepsilon }v^{\varepsilon })_{\varepsilon >0}$
is uniformly bounded in $L^{2}(\Sigma ^{\varepsilon })$ and $(v^{\varepsilon
})_{\varepsilon >0}$ two-scale converges to $v$ in the sense of Definition %
\ref{d1}\ref{d12}. we have $w_{0}=v_{0}$.
\end{description}
\end{enumerate}
\end{theorem}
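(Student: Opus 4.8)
The plan is to build the three assertions on a common functional-analytic skeleton and then identify the limits. For assertion~1, I would regard $\varphi\mapsto\int_\Omega v^\varepsilon(x)\varphi(x,x/\varepsilon)\,dx$ as a family of linear functionals on $\mathcal{D}(\Omega;\mathcal{C}^\infty_{per}(Y))$. The essential tool is the averaging property of oscillating functions, $\|\varphi(\cdot,\cdot/\varepsilon)\|_{0,\Omega}\to\|\varphi\|_{L^2(\Omega\times Y)}$, so that Cauchy--Schwarz and the uniform bound $\|v^\varepsilon\|_{0,\Omega}\le C$ give $|\int_\Omega v^\varepsilon\varphi(x,x/\varepsilon)\,dx|\le C\|\varphi\|_{L^2(\Omega\times Y)}$ for $\varepsilon$ small. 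Since $\mathcal{D}(\Omega;\mathcal{C}^\infty_{per}(Y))$ is separable and dense in $L^2(\Omega\times Y)$, a diagonal extraction over a countable dense family produces a subsequence along which the functionals converge for every admissible $\varphi$; the limit extends to a bounded functional on $L^2(\Omega\times Y)$, and the Riesz representation theorem supplies the two-scale limit $v_0$.

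Assertion~2 follows by the identical scheme applied to $\varphi\mapsto\varepsilon\int_{\Sigma^\varepsilon}v^\varepsilon\varphi(x,x/\varepsilon)\,d\sigma^\varepsilon(x)$, once the bulk averaging property is replaced by its periodic-surface analogue $\varepsilon\int_{\Sigma^\varepsilon}|\varphi(x,x/\varepsilon)|^2\,d\sigma^\varepsilon\to\int_{\Omega\times\Sigma}|\varphi|^2\,dx\,d\sigma(y)$, valid for $\varphi\in\mathcal{D}(\overline\Omega;\mathcal{C}^\infty_{per}(Y))$; this is the surface averaging lemma of \cite{adh} and \cite{nr}. Writing the integrand as $(\sqrt\varepsilon\,v^\varepsilon)(\sqrt\varepsilon\,\varphi(x,x/\varepsilon))$ and invoking the hypothesis that $\sqrt\varepsilon\,v^\varepsilon$ is bounded in $L^2(\Sigma^\varepsilon)$ bounds the functional by $C\|\varphi\|_{L^2(\Omega\times\Sigma)}$, and Riesz again yields $w_0\in L^2(\Omega\times\Sigma)$.

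For assertion~3(i), applying assertion~1 to the $L^2$-bounded sequences $(v^\varepsilon)$ and $(\nabla v^\varepsilon)$ gives two-scale limits $v_0$ and $\xi_0$. Testing against a $y$-divergence, for $\varphi\in\mathcal{D}(\Omega;\mathcal{C}^\infty_{per}(Y))^n$ integration by parts gives
\[
\varepsilon\int_\Omega\nabla v^\varepsilon\cdot\varphi(x,\tfrac x\varepsilon)\,dx=-\varepsilon\int_\Omega v^\varepsilon(\operatorname{div}_x\varphi)(x,\tfrac x\varepsilon)\,dx-\int_\Omega v^\varepsilon(\operatorname{div}_y\varphi)(x,\tfrac x\varepsilon)\,dx ;
\]
letting $\varepsilon\to0$, the left-hand side and the first right-hand term vanish because $\|\nabla v^\varepsilon\|_{0,\Omega}$ is bounded, leaving $\int_{\Omega\times Y}v_0\operatorname{div}_y\varphi=0$, i.e. $\nabla_y v_0=0$, so $v_0=v(x)$. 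By Rellich--Kondrachov $v^\varepsilon\to v$ strongly in $L^2(\Omega)$ and $v$ is the weak $H^1$ limit, settling (i). For (ii) I would test $\nabla v^\varepsilon$ against $y$-solenoidal fields $\varphi$ (those with $\operatorname{div}_y\varphi=0$): the $1/\varepsilon$ term disappears and passage to the limit gives $\int_{\Omega\times Y}(\xi_0-\nabla v)\cdot\varphi=0$ for all such $\varphi$; the Weyl--de Rham decomposition of $L^2(\Omega\times Y)^n$ into $y$-gradients and $y$-solenoidal fields then forces $\xi_0-\nabla v=\nabla_y V$ with $V\in L^2(\Omega;H^1_{per}(Y)/\mathbb{R})$.

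Finally, in (iii) the bound $\varepsilon\|v^\varepsilon\|_{0,\Sigma^\varepsilon}^2\le C$ is immediate from Lemma~\ref{l1} with $\delta=1$ and the $H^1$-bound, so assertion~2 yields a surface two-scale limit $w_0$. The main obstacle is the identification $w_0=v$: the surface limit must coincide with the (trace of the) bulk limit $v$, which carries no $y$-dependence. I would prove this by approximation: for smooth $\tilde v\in\mathcal{C}^\infty(\overline\Omega)$ the surface averaging lemma gives that $\tilde v$ surface two-scale converges to $\tilde v(x)$, so $v^\varepsilon-\tilde v$ surface two-scale converges to $w_0-\tilde v$; lower semicontinuity of the norm under surface two-scale convergence together with Lemma~\ref{l1} (again $\delta=1$) yields
\[
\|w_0-\tilde v\|_{L^2(\Omega\times\Sigma)}^2\le\liminf_{\varepsilon\to0}\varepsilon\|v^\varepsilon-\tilde v\|_{0,\Sigma^\varepsilon}^2\le C\limsup_{\varepsilon\to0}\bigl(\|v^\varepsilon-\tilde v\|_{0,\Omega}^2+\varepsilon^2\|\nabla(v^\varepsilon-\tilde v)\|_{0,\Omega}^2\bigr)=C\|v-\tilde v\|_{0,\Omega}^2 ,
\]
using $v^\varepsilon\to v$ strongly in $L^2(\Omega)$ and the uniform $H^1$-bound on the gradient. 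Letting $\tilde v\to v$ in $L^2(\Omega)$ forces $w_0=v=v_0$, which is the delicate surface--bulk matching at the heart of the periodic-surface two-scale theory of \cite{adh} and \cite{nr}.
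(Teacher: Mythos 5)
Your proof is correct, but the comparison baseline here is unusual: the paper gives no proof of Theorem \ref{l2} at all --- it explicitly \emph{recalls} these compactness results as known, citing Nguetseng and Allaire for the bulk statements and Allaire--Damlamian--Hornung and Neuss-Radu for the surface ones. What you have written is, in substance, the standard proof from exactly those references: the Riesz-representation compactness argument built on the averaging property of admissible test functions (parts 1 and 2, with the periodic-surface averaging lemma replacing the bulk one), then for part 3 the $\varepsilon$-scaled integration by parts killing the $y$-dependence of $v_0$, and the testing of $\nabla v^\varepsilon$ against $y$-solenoidal fields combined with the Helmholtz--Weyl decomposition on the torus --- note that constant-in-$y$ fields are solenoidal, so your orthogonality relation already forces the $y$-mean of $\xi_0$ to equal $\nabla v$, which is why the decomposition comes out as $\xi_0=\nabla v+\nabla_y V$ with no extra term. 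The one genuinely delicate point, the identification $w_0=v$ in 3(iii), you handle by a self-contained route: the uniform bound $\varepsilon\|v^\varepsilon\|^2_{0,\Sigma^\varepsilon}\le C$ from Lemma \ref{l1} with $\delta_i=1$, lower semicontinuity of the scaled surface norm under surface two-scale convergence (itself provable by Cauchy--Schwarz against a dense family of test functions), strong $L^2(\Omega)$ convergence via Rellich, and approximation of $v$ by smooth $\tilde v$; this is in essence the trace-estimate argument behind the result of Allaire et al.\ and Neuss-Radu that the paper invokes later in Lemma \ref{l3}. Two steps are glossed but standard and easily repaired: in part 1 the diagonal extraction over a countable dense family needs the asymptotic equicontinuity bound $|\mu^\varepsilon(\varphi)|\le C\bigl(\|\varphi\|_{L^2(\Omega\times Y)}+o(1)\bigr)$ to propagate convergence to every admissible $\varphi$, and in 3(iii) one should note that since $v^\varepsilon\in H^1(\Omega)$ its traces from $\Omega_1^\varepsilon$ and $\Omega_2^\varepsilon$ on $\Sigma^\varepsilon$ coincide, so Lemma \ref{l1} applies on each side and the estimates sum. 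With those remarks, the proposal is a complete and faithful proof of the theorem the paper states without proof.
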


Next we shall apply these results to determine the two-scale limit of $%
(u_{i}^{\varepsilon })_{\varepsilon >0}$ and the source terms $\left(
f_{i}^{\varepsilon }\right) _{\varepsilon >0}$, $i=1,2$.

\begin{lemma}
\label{l3}For each $i=1,2$ there exist $f_{i}\in L^{2}(\Omega \times Y_{i})$%
, $u_{i}(x)\in H_{0}^{1}(\Omega )$ and $U_{i}\in L^{2}(\Omega
,H_{per}^{1}(Y_{i})/\mathbb{R})$ such that up to a subsequence we have for
all $\varphi _{i}\in \mathcal{D(}\Omega ;\mathcal{C}_{per}^{\infty }\left(
Y_{i}\right) )$ and $\psi _{i}\in \mathcal{D(}\Omega ;\mathcal{C}%
_{per}^{\infty }\left( Y_{i}\right) )^{n}$:%
\begin{equation}
\underset{\varepsilon \rightarrow 0}{\lim }\int_{\Omega _{i}^{\varepsilon
}}f_{i}^{\varepsilon }(x)\varphi _{i}(x,\frac{x}{\varepsilon }%
)dx=\int_{\Omega }\int_{Y_{i}}f_{i}(x,y)\varphi _{i}(x,y)dydx,  \label{42}
\end{equation}

\begin{equation}
\underset{\varepsilon \rightarrow 0}{\lim }\int_{\Omega _{i}^{\varepsilon
}}u_{i}^{\varepsilon }(x)\varphi _{i}(x,\frac{x}{\varepsilon }%
)dx=\int_{\Omega }\int_{Y_{i}}u_{i}(x)\varphi _{i}(x,y)dydx,  \label{43}
\end{equation}

\begin{equation}
\underset{\varepsilon \rightarrow 0}{\lim }\int_{\Omega }\nabla
u_{i}^{\varepsilon }(x)\psi _{i}(x,\frac{x}{\varepsilon })dx=\int_{\Omega
}\int_{Y_{i}}[\nabla u_{i}(x)+\nabla _{y}U_{i}(x,y)]\psi _{i}(x,y)dydx
\label{44}
\end{equation}%
and for all $\varphi \in \mathcal{D(}\overline{\Omega };\mathcal{C}%
_{per}^{\infty }\left( Y\right) )$%
\begin{equation}
\underset{\varepsilon \rightarrow 0}{\lim }\int_{\Sigma ^{\varepsilon
}}\varepsilon u_{i}^{\varepsilon }(x)\varphi (x,\frac{x}{\varepsilon }%
)d\sigma ^{\varepsilon }(x)=\int_{\Omega }\int_{\Sigma }u_{i}(x)\varphi
(x,y)d\sigma (y)dx,\qquad i=1,2.  \label{45}
\end{equation}
\end{lemma}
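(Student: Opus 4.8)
The plan is to extract two-scale limits from the a priori bound (\ref{35}) via the compactness results of Theorem \ref{l2}, and then to identify the structure of each limit. First I would treat the source terms: extending $f_i^{\varepsilon}$ by zero outside $\Omega_i^{\varepsilon}$, the sequence $\chi_i(\cdot/\varepsilon)f_i^{\varepsilon}$ is bounded in $L^2(\Omega)$ by (\ref{10}), so Theorem \ref{l2}(1) yields a subsequence that two-scale converges to some $f_i\in L^2(\Omega\times Y)$ supported in $\Omega\times Y_i$; testing against $\varphi_i\in\mathcal{D}(\Omega;\mathcal{C}_{per}^{\infty}(Y_i))$ gives (\ref{42}). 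Likewise (\ref{35}) bounds $\chi_i(\cdot/\varepsilon)u_i^{\varepsilon}$ in $L^2(\Omega)$ and $\chi_i(\cdot/\varepsilon)\nabla u_i^{\varepsilon}$ in $L^2(\Omega)^n$, so after a further extraction these two-scale converge to limits $u_i^{0}\in L^2(\Omega\times Y_i)$ and $\xi_i\in L^2(\Omega\times Y_i)^n$ respectively.

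The heart of the argument is to identify these limits. To see that $u_i^{0}$ is independent of $y$, I would test $\nabla u_i^{\varepsilon}$ against $\varepsilon\,\psi_i(x,x/\varepsilon)$ for $\psi_i\in\mathcal{D}(\Omega;\mathcal{C}_{per}^{\infty}(Y_i))^n$ with support compactly contained in $Y_i$ (so that no trace term appears on $\Sigma^{\varepsilon}$ and none on $\Gamma$), integrate by parts in $x$, and let $\varepsilon\rightarrow0$; since $\mathrm{div}_x[\varepsilon\psi_i(x,x/\varepsilon)]=\varepsilon\,\mathrm{div}_x\psi_i+\mathrm{div}_y\psi_i$, only the $\mathrm{div}_y$ term survives, forcing $\int_{\Omega}\int_{Y_i}u_i^{0}\,\mathrm{div}_y\psi_i\,dy\,dx=0$, whence $\nabla_y u_i^{0}=0$ and $u_i^{0}=u_i(x)$. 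To obtain the cell-corrector structure $\xi_i=\nabla u_i+\nabla_y U_i$ with $U_i\in L^2(\Omega;H_{per}^{1}(Y_i)/\mathbb{R})$, I would instead restrict to divergence-free test fields $\psi_i$ supported in $Y_i$: the $\frac1\varepsilon\mathrm{div}_y$ contribution vanishes, the limit yields $\int_{\Omega}\int_{Y_i}(\xi_i-\nabla u_i)\cdot\psi_i\,dy\,dx=0$, and the standard characterization of the orthogonal complement of solenoidal periodic fields identifies $\xi_i-\nabla u_i$ as a $y$-gradient. Equations (\ref{43}) and (\ref{44}) then follow by testing the two limits against $\varphi_i$ and $\psi_i$.

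For the surface limit (\ref{45}), choosing $\delta_i=1$ in Lemma \ref{l1} together with (\ref{35}) shows that $(\sqrt{\varepsilon}\,u_i^{\varepsilon})_{\varepsilon}$ is bounded in $L^2(\Sigma^{\varepsilon})$; Theorem \ref{l2}(2) then provides a surface two-scale limit $w_i\in L^2(\Omega\times\Sigma)$, and Theorem \ref{l2}(3)(iii) identifies it with the interior limit, which equals $u_i(x)$ and is independent of $y$. This delivers (\ref{45}).

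The main obstacle is twofold. Because $u_i^{\varepsilon}$ lives only on the perforated domain $\Omega_i^{\varepsilon}$, reducing to the $H^1(\Omega)$ hypothesis of Theorem \ref{l2}(3) requires a uniformly bounded extension operator $P_i^{\varepsilon}\colon H^1(\Omega_i^{\varepsilon})\rightarrow H^1(\Omega)$ (available when the phase $Y_i$ is connected with Lipschitz boundary), after which $P_i^{\varepsilon}u_i^{\varepsilon}$ is bounded in $H^1(\Omega)$, furnishes $u_i\in H^1(\Omega)$ and $U_i$, and agrees with $u_i^{\varepsilon}$ on $\Omega_i^{\varepsilon}$ so that the limits above are recovered; in the absence of such an operator the decomposition of $\xi_i$ must be argued directly using only test fields supported in $Y_i$, as sketched. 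Second, promoting $u_i$ from $H^1(\Omega)$ to $H_0^{1}(\Omega)$ must be extracted from the partial Dirichlet condition $u_i^{\varepsilon}=0$ on $\Gamma_i^{\varepsilon}$, that is, from the fact that $\Gamma_i^{\varepsilon}$ fills $\Gamma$ as $\varepsilon\rightarrow0$; this boundary identification is the delicate point of the proof.
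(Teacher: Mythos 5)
Your overall strategy --- extraction of two-scale limits from the a priori bounds (\ref{10}) and (\ref{35}), identification of the limits in the style of Allaire, and surface two-scale convergence for (\ref{45}) --- is exactly the paper's, which disposes of (\ref{43})--(\ref{44}) by citing Allaire \cite[Thm 2.9]{al} and of (\ref{45}) by citing Allaire et al. \cite[Prop. 2.6]{adh}. However, the direct identification argument you offer as a fallback (precisely for the case where no uniform extension operator is available) fails as written. If $\Psi \in \mathcal{D}(Y_{i})^{n}$ is solenoidal and compactly supported in $Y_{i}$, then every component has zero mean: $\int_{Y_{i}}\Psi_{k}\,dy=\int_{Y_{i}}\Psi \cdot \nabla y_{k}\,dy=-\int_{Y_{i}}y_{k}\,\mathrm{div}_{y}\Psi \,dy=0$. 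Consequently, testing with $\psi_{i}(x,y)=\phi(x)\Psi(y)$, the term $-\int_{\Omega}\int_{Y_{i}}u_{i}\,\mathrm{div}_{x}\psi_{i}\,dy\,dx$ vanishes identically and the limit relation degenerates to $\int_{\Omega}\int_{Y_{i}}\xi_{i}\cdot \psi_{i}\,dy\,dx=0$: it carries no information about $\nabla u_{i}$ (which is itself the $y$-gradient of a linear function, invisible to such fields), so it can neither establish $u_{i}\in H^{1}(\Omega)$ --- which you implicitly presuppose the moment you write $\xi_{i}-\nabla u_{i}$ --- nor separate the two pieces of the decomposition. Moreover, orthogonality to compactly supported solenoidal fields only identifies $\xi_{i}(x,\cdot)$ as a gradient of some function in $H^{1}_{loc}(Y_{i})$; the $Y$-periodicity of $U_{i}$ asserted in the Lemma is lost. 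The correct direct argument, which is the actual content of Allaire's Thm 2.9 (and which needs no extension operator, so your extension-operator branch, while legitimate for the $H^{1}$ part, is not forced), uses $Y$-periodic solenoidal fields with $\Psi \cdot \nu =0$ on $\Sigma$: these produce no boundary term on $\Sigma^{\varepsilon}$ when integrating by parts over $\Omega_{i}^{\varepsilon}$, and when the phase is connected their means $\int_{Y_{i}}\Psi\,dy$ span all of $\mathbb{R}^{n}$ (take $\Psi=\lambda+\nabla_{y}q$ with $\Delta_{y}q=0$ in $Y_{i}$, $\nabla_{y}q\cdot \nu=-\lambda \cdot \nu$ on $\Sigma$, $q$ periodic), which simultaneously yields $u_{i}\in H^{1}(\Omega)$ and $\xi_{i}=\nabla u_{i}+\nabla_{y}U_{i}$ with $U_{i}\in L^{2}(\Omega,H_{per}^{1}(Y_{i})/\mathbb{R})$.

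Second, the statement claims $u_{i}\in H_{0}^{1}(\Omega)$, and you explicitly leave this open (``the delicate point of the proof''); a proof must close it. It is not hard: since $u_{i}^{\varepsilon}=0$ on $\Gamma_{i}^{\varepsilon}=\overline{\Omega_{i}^{\varepsilon}}\cap \Gamma$, the extension of $u_{i}^{\varepsilon}$ by zero to the phase-$i$ subset of a larger domain $\widetilde{\Omega}\supset\supset \Omega$ stays in $H^{1}$ of the enlarged perforated domain with the same bounds; applying the same compactness there gives a limit in $H^{1}(\widetilde{\Omega})$ that vanishes outside $\Omega$ and coincides with $u_{i}$ inside, whence $u_{i}\in H_{0}^{1}(\Omega)$. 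With these two repairs your proof is complete and coincides in substance with the paper's (terser, citation-based) argument; your treatment of (\ref{42}) and of the surface limit (\ref{45}) via Lemma \ref{l1} with $\delta_{i}=1$ and Theorem \ref{l2} is sound and matches the paper.
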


\begin{proof}
The existence of the two-scale limits is an immediate consequence of the a
priori estimates (\ref{10}), (\ref{35}) and the definition of two-scale
convergence. Indeed, since for $i=1,2$ the sequences $\chi _{i}(\frac{x}{%
\varepsilon })f_{i}^{\varepsilon }(x),$ $\chi _{i}(\frac{x}{\varepsilon }%
)u_{i}^{\varepsilon }(x)_{\varepsilon >0},$ $(\chi _{i}(\frac{x}{\varepsilon
})\nabla u_{i}^{\varepsilon }(x))_{\varepsilon >0}$\ and $(\sqrt{\varepsilon
}u_{i}^{\varepsilon }(x))_{\varepsilon >0}$ are uniformly bounded in $%
L^{2}(\Omega )$,$\ L^{2}(\Omega )$, $(L^{2}(\Omega ))^{n}$ and $L^{2}(\Sigma
^{\varepsilon })$ respectively. Then by Theorem \ref{l2}, one can extract a
subsequence still denoted $\varepsilon $ and there exist $f_{i}(x,y),\
u_{i}^{0}(x,y)\in L^{2}(\Omega \times Y)$ and $\xi _{i}\in (L^{2}(\Omega
\times Y))^{n}$ such that $\chi _{i}(\frac{x}{\varepsilon }%
)f_{i}^{\varepsilon }(x)$, $\chi _{i}(\frac{x}{\varepsilon }%
)u_{i}^{\varepsilon }(x)$ and $\chi _{i}(\frac{x}{\varepsilon })\nabla
u_{i}^{\varepsilon }(x)$ two-scale converge respectively to $f_{i}(x,y)$, $%
u_{i}(x,y)$ and $\xi _{i}(x,y)$. We point out that $f_{i}(x,y),\ u_{i}(x,y)$
and $\xi _{i}(x,y)$ are equal zero outside $Y_{i}$. Arguing as in Allaire
\cite[Thm 2.9]{al} we easilty arrive at $u_{i}^{0}(x,y)=\chi _{i}(y)u_{i}(x)$
where $u_{i}(x)\in H_{0}^{1}(\Omega )$ and $\xi _{i}(x,y)=\nabla
u_{i}(x)+\nabla _{y}U_{i}(x,y)$ where $U_{i}\in L^{2}(\Omega
,H_{per}^{1}(Y_{i})/\mathbb{R})$. Finally, by Allaire et al. \cite[Prop. 2.6]%
{adh} (see also Ainouz \cite{ainthesis} or Neuss-Radu \cite[Thm. 2.2]{nr}))
we obtain the last limit.
\end{proof}

The following result is an extension of an auxiliary result given in \cite%
{ain}.

\begin{lemma}
\label{lem2}Up to the subsequence given in Lemma \ref{l3}, we have
\begin{equation*}
\underset{\varepsilon \rightarrow 0}{\lim }\int_{\Sigma ^{\varepsilon
}}u_{i}^{\varepsilon }(x)\varphi _{i}(x,\frac{x}{\varepsilon })d\sigma
^{\varepsilon }(x)=\int_{\Omega }\int_{\Sigma }U_{i}(x,y)\varphi
_{i}(x,y)d\sigma (y)dx
\end{equation*}%
for any $\varphi _{i}\in \mathcal{D(}\overline{\Omega };\mathcal{C}%
_{per}\left( \overline{Y_{i}}\right) )$ such that $\int_{\Sigma }\varphi
_{i}(x,y)d\sigma (y)=0\qquad i=1,2.$
\end{lemma}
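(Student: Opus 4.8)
The plan is to convert the surface integral over $\Sigma^{\varepsilon}$, which here carries no factor of $\varepsilon$ and would blow up like $\varepsilon^{-1}$ for a generic test function, into volume integrals over $\Omega_i^{\varepsilon}$ to which the two-scale limits of Lemma \ref{l3} apply. The device is an integration by parts against an auxiliary vector field whose $y$-divergence vanishes, and the zero-average hypothesis $\int_{\Sigma}\varphi_i(x,y)\,d\sigma(y)=0$ is precisely the solvability condition that makes this field exist. By density I would first reduce to $\varphi_i$ smooth in $y$ (both sides depend continuously on $\varphi_i$ in a suitable norm, by the a priori bound (\ref{35})), which legitimizes the construction below.

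Concretely, for each fixed $x$ I would solve the cell Neumann problem: find $w_i(x,\cdot)\in H^1_{per}(Y_i)/\mathbb{R}$ with $\Delta_y w_i=0$ in $Y_i$ and $\nabla_y w_i\cdot\nu_i=\varphi_i(x,\cdot)$ on $\Sigma$, where $\nu_i$ is the outward normal to $Y_i$, and set $\Psi_i(x,y):=\nabla_y w_i(x,y)$. The divergence theorem on $Y_i$ (the outer faces cancelling by $Y$-periodicity) shows the compatibility condition is exactly $\int_{\Sigma}\varphi_i\,d\sigma=0$, which is the hypothesis; by Lax--Milgram $w_i$ then exists, is unique up to a constant, depends smoothly on the parameter $x$ by elliptic regularity, and satisfies $\mathrm{div}_y\Psi_i=0$ in $Y_i$ with $\Psi_i\cdot\nu_i=\varphi_i$ on $\Sigma$.

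Then I would apply the divergence theorem to $u_i^{\varepsilon}\,\Psi_i(x,x/\varepsilon)$ on $\Omega_i^{\varepsilon}$. Because $\mathrm{div}_y\Psi_i=0$, the singular $\varepsilon^{-1}$ contribution of $\mathrm{div}_x[\Psi_i(x,x/\varepsilon)]$ disappears, leaving
\[
\int_{\Sigma^{\varepsilon}}u_i^{\varepsilon}\,\varphi_i\Big(x,\tfrac{x}{\varepsilon}\Big)\,d\sigma^{\varepsilon}(x)=\int_{\Omega_i^{\varepsilon}}\nabla u_i^{\varepsilon}\cdot\Psi_i\Big(x,\tfrac{x}{\varepsilon}\Big)\,dx+\int_{\Omega_i^{\varepsilon}}u_i^{\varepsilon}\,(\mathrm{div}_x\Psi_i)\Big(x,\tfrac{x}{\varepsilon}\Big)\,dx ,
\]
where the boundary term on $\Gamma_i^{\varepsilon}$ is discarded using $u_i^{\varepsilon}=0$ there, and $\Psi_i\cdot\nu_i=\varphi_i$ is used on $\Sigma^{\varepsilon}$. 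Passing to the limit with (\ref{43}) and (\ref{44}) yields $\int_{\Omega}\int_{Y_i}(\nabla u_i+\nabla_y U_i)\cdot\Psi_i\,dy\,dx+\int_{\Omega}\int_{Y_i}u_i\,\mathrm{div}_x\Psi_i\,dy\,dx$. An integration by parts in $y$ (periodicity killing the outer boundary and $\mathrm{div}_y\Psi_i=0$) turns the $\nabla_y U_i$ term into $\int_{\Omega}\int_{\Sigma}U_i\,\varphi_i\,d\sigma\,dx$, the desired limit; the remaining two terms combine into $\int_{\Omega}\mathrm{div}_x\big(u_i\int_{Y_i}\Psi_i\,dy\big)\,dx=\int_{\Gamma}u_i\big(\int_{Y_i}\Psi_i\,dy\big)\cdot n\,d\sigma$, which vanishes since $u_i\in H_0^1(\Omega)$.

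The routine points are the admissibility of $\Psi_i(x,x/\varepsilon)$ as a two-scale test function, which follows because $\Psi_i\in\mathcal{C}^{\infty}(\overline{\Omega};\mathcal{C}_{per}^{\infty}(\overline{Y_{i}}))^{n}$ after the density reduction, and the control of the cells cut by $\Gamma$ in the pre-limit divergence theorem. The genuine obstacle, and the heart of the argument, is the construction of the divergence-free lifting $\Psi_i$ with prescribed normal trace $\varphi_i$: it is exactly here that the zero-mean assumption is indispensable, and it is $\mathrm{div}_y\Psi_i=0$ that removes the $\varepsilon^{-1}$ blow-up and isolates the corrector $U_i$ rather than the macroscopic limit $u_i$ (the latter being what the $\varepsilon$-weighted statement (\ref{45}) captures).
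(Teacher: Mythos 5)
Your proof is correct and is essentially the paper's own argument: the paper constructs the very same divergence-free, $Y$-periodic auxiliary field $\phi_i$ with prescribed normal trace $(-1)^{i-1}\varphi_i$ on $\Sigma$ (your $\Psi_i=\nabla_y w_i$, made explicit through the scalar Neumann potential, with the zero-average hypothesis as the compatibility condition), applies the divergence theorem on $\Omega_i^{\varepsilon}$ so that $\mathrm{div}_y\Psi_i=0$ removes the $\varepsilon^{-1}$ term, passes to the two-scale limit via (\ref{43})--(\ref{44}), and eliminates the macroscopic contribution by an integration by parts in $x$ using $u_i\in H_0^1(\Omega)$, exactly as in (\ref{49})--(\ref{52}). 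The only deviation is your preliminary density reduction to $\varphi_i$ smooth in $y$, which the paper does not need and whose stated justification is delicate (since $|\Sigma^{\varepsilon}|\sim\varepsilon^{-1}$, the left-hand side is not controlled uniformly in $\varepsilon$ by $\|\varphi_i\|_{\infty}$ alone; a uniform bound requires first converting to volume integrals via the lifting itself), but this step is harmless because the field obtained from merely continuous data lies in $\mathcal{C}(\overline{\Omega};L_{per}^{2}(Y_i))^{n}$ and is already an admissible two-scale test function.
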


\begin{proof}
For a fixed $x$ in $\overline{\Omega }$, and $i=1,2$ let $\phi _{i}(x,y)$ be
the solution of the following boundary value problem
\begin{equation}
-\mathrm{div}_{y}\phi _{i}(x,y)=0\text{ in }Y_{i}  \label{46}
\end{equation}%
\begin{equation}
\phi _{i}(x,y)\cdot \nu (y)=\left( -1\right) ^{i-1}\varphi _{i}(x,y)\text{
on }\Sigma ,  \label{47}
\end{equation}%
\begin{equation}
y\longmapsto \phi _{i}(x,y)\text{ is }Y\text{-periodic.}  \label{48}
\end{equation}%
Such a function exists since $\int_{\Sigma }\varphi _{i}(x,y)d\sigma (y)=0$.
Furthermore, the solution $\phi _{i}(x,\cdot )$ belongs to $%
H_{per}^{1}(Y_{i})/\mathbb{R}$. Taking into account the boundary conditions (%
\ref{47}) and (\ref{48}) we see that
\begin{equation}
\int_{\Sigma ^{\varepsilon }}u_{i}^{\varepsilon }(x)\varphi _{i}(x,y)d\sigma
^{\varepsilon }(x)=\int_{\Omega _{i}^{\varepsilon }}\nabla
u_{i}^{\varepsilon }(x)\phi _{i}(x,\frac{x}{\varepsilon })dx+\int_{\Omega
_{i}^{\varepsilon }}u_{i}^{\varepsilon }(x)\mathrm{div}_{x}(\phi _{i}(x,%
\frac{x}{\varepsilon }))dx.  \label{49}
\end{equation}%
By (\ref{46}) and letting $\varepsilon \rightarrow 0$ in (\ref{49}) together
with (\ref{43}), (\ref{44}) we have
\begin{eqnarray}
\underset{\varepsilon \rightarrow 0}{\lim }\int_{\Sigma ^{\varepsilon
}}u_{i}^{\varepsilon }(x)\varphi _{i}(x,y)d\sigma ^{\varepsilon }(x)
&=&\int_{\Omega }\int_{Y_{i}}[\nabla u_{i}(x)+\nabla _{y}U_{i}(x,y)]\phi
_{i}(x,y)dydx  \notag \\
&&+\int_{\Omega }\int_{Y_{i}}u_{i}(x)\mathrm{div}_{x}\phi _{i}(x,y)dydx\text{%
.}  \label{50}
\end{eqnarray}%
Since $u_{i}\in H_{0}^{1}(\Omega )$, integration by parts with respect to $x$
gives
\begin{equation*}
\int_{\Omega }\nabla u_{i}(x)\phi _{i}(x,y)dx+\int_{\Omega }u_{i}(x)\mathrm{%
div}_{x}\phi _{i}(x,y)dx=0
\end{equation*}%
Therefore (\ref{50}) becomes
\begin{equation}
\underset{\varepsilon \rightarrow 0}{\lim }\int_{\Sigma ^{\varepsilon
}}u_{i}^{\varepsilon }(x)\varphi _{i}(x,y)d\sigma ^{\varepsilon
}(x)=\int_{\Omega }\int_{Y_{i}}\nabla _{y}U_{i}(x,y)\phi _{i}(x,y)dydx
\label{51}
\end{equation}%
Again, integrating by parts with respect to $y$\ in the right hand side of (%
\ref{51}) and using (\ref{47}), (\ref{48}) yield
\begin{eqnarray}
\int_{Y_{i}}\nabla _{y}U_{i}(x,y)\phi _{i}(x,y)dydx
&=&-\int_{Y_{i}}U_{i}(x,y)\mathrm{div}_{y}\phi _{i}(x,y)dydx+  \notag \\
&&\left( -1\right) ^{i-1}\int_{\Sigma }U_{i}(x,y)(\phi _{i}(x,y)\cdot \nu
(y))d\sigma (y)dx  \notag \\
&=&\int_{\Sigma }U_{i}(x,y)\varphi _{i}(x,y)d\sigma (y)dx.  \label{52}
\end{eqnarray}%
Finally, by combining (\ref{51}) and (\ref{52}) we arrive at the desired
result. This proves the Lemma.\hfill
\end{proof}

Now we are in a position to give the two-scale homogenized problem.

\begin{proposition}
The two-scale limit $(u_{i},U_{i})\in H_{0}^{1}(\Omega )\times L^{2}(\Omega
,H_{per}^{1}(Y_{i})/\mathbb{R})$ is the solution of the two-scale
homogenized system%
\begin{equation}
\left\{
\begin{array}{l}
-\mathrm{div}_{y}(A_{i}(y)(\nabla u_{i}(x)+\nabla _{y}U_{i}(x,y)))=0\text{
in }\Omega \times Y_{i}\text{,\qquad }i=1,2 \\
\  \\
-\mathrm{div}_{x}(\int_{Y_{i}}A_{i}(\nabla u_{i}+\nabla
_{y}U_{i})dy)+\int_{Y_{i}}a_{i}(y)u_{i}(y)dy+ \\
\  \\
+\left( -1\right) ^{i-1}\int_{\Sigma }\alpha (U_{1}-U_{2}))d\sigma (y)=g_{i}%
\text{ in }\Omega \text{,\qquad }i=1,2 \\
\  \\
(A_{1}(\nabla u_{1}+\nabla _{y}U_{1}))\cdot \nu =(A_{2}(\nabla u_{2}+\nabla
_{y}U_{2}))\cdot \nu \text{ on }\Omega \times \Sigma \text{,} \\
\  \\
(A_{1}(\nabla u_{1}+\nabla _{y}U_{1}))\cdot \nu =-\alpha (u_{1}-u_{2})=0%
\text{ on }\Omega \times \Sigma \text{,} \\
\  \\
u_{i}=0\text{ on }\Gamma \text{,\ }y\mapsto U_{i}(x,y)\text{ is }Y\text{%
-periodic.\qquad }i=1,2%
\end{array}%
\right.  \label{53}
\end{equation}
\end{proposition}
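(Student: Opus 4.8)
The plan is to pass to the two-scale limit in the weak formulation (\ref{30}) using admissible oscillating test functions, and then to read off the system (\ref{53}) by isolating the macroscopic and microscopic parts of the resulting variational identity. I would take test functions of the form $v_i(x)=\phi_i(x)+\varepsilon\psi_i(x,x/\varepsilon)$ with $\phi_i\in\mathcal{D}(\Omega)$ and $\psi_i\in\mathcal{D}(\Omega;\mathcal{C}_{per}^{\infty}(Y_i))$, so that $\nabla v_i$ two-scale converges to $\nabla\phi_i+\nabla_y\psi_i$, the $\varepsilon\nabla_x\psi_i$ piece being negligible.

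Inserting these into $a^\varepsilon((u_1^\varepsilon,u_2^\varepsilon),(v_1,v_2))=L^\varepsilon((v_1,v_2))$ and treating the volume terms, the interfacial term, and the right-hand side separately, the volume diffusion term converges, by Lemma \ref{l3} (limits (\ref{43})--(\ref{44})) and the admissibility of the test vector $A_i^{T}(x/\varepsilon)\nabla v_i$ (here the continuity of $A_i$ is used), to $\sum_i\int_\Omega\int_{Y_i}A_i(\nabla u_i+\nabla_y U_i)\cdot(\nabla\phi_i+\nabla_y\psi_i)\,dy\,dx$; the zeroth-order term converges to $\sum_i\int_\Omega\int_{Y_i}a_i u_i\phi_i\,dy\,dx$, the $\varepsilon\psi_i$ correction dropping out; and the right-hand side converges, by (\ref{42}) and the definition (\ref{24}) of $g_i$, to $\sum_i\int_\Omega g_i\phi_i\,dx$. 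These limits are all routine.

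The delicate point, and the step I expect to be the main obstacle, is the interfacial integral $\int_{\Sigma^\varepsilon}\alpha(x/\varepsilon)(u_1^\varepsilon-u_2^\varepsilon)(v_1-v_2)\,d\sigma^\varepsilon$. Writing $v_1-v_2=(\phi_1-\phi_2)+\varepsilon(\psi_1-\psi_2)$ splits it into two pieces. The piece carrying the macroscopic factor $\phi_1-\phi_2$ has no $\varepsilon$ in front, so a priori it behaves like $\varepsilon^{-1}$; it is precisely here that the zero-average hypothesis (\ref{3}) enters, since it guarantees that $\varphi_i:=\alpha(\phi_1-\phi_2)$ satisfies $\int_\Sigma\varphi_i\,d\sigma=0$, whence Lemma \ref{lem2} cancels the diverging part and yields the finite corrector contribution $\int_\Omega\int_\Sigma\alpha(U_1-U_2)(\phi_1-\phi_2)\,d\sigma\,dx$. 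The piece carrying $\varepsilon(\psi_1-\psi_2)$ is handled by the surface two-scale convergence (\ref{45}). I would check its scaling carefully against the a priori bound on $\|u_1^\varepsilon-u_2^\varepsilon\|_{0,\Sigma^\varepsilon}$ supplied by (\ref{35}), as this is exactly what fixes the interface flux condition in (\ref{53}).

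Collecting all the limits yields a single two-scale variational identity valid for every $(\phi_i,\psi_i)$, which by density extends to all admissible test pairs. To extract (\ref{53}) I would first set $\phi_i\equiv 0$: integrating by parts in $y$ over $Y_i$, the $Y$-periodic boundary contributions cancel and the two orientations of $\nu$ on $\Sigma$ produce the factor $(-1)^{i-1}$, so the arbitrariness of $\psi_i$ gives the cell equation $-\mathrm{div}_y(A_i(\nabla u_i+\nabla_y U_i))=0$ in $\Omega\times Y_i$ together with the transmission and interface conditions on $\Omega\times\Sigma$. I would then set $\psi_i\equiv 0$ and vary $\phi_1,\phi_2$ independently (letting one of them vanish at a time to decouple): integration by parts in $x$, using $u_i\in H_0^1(\Omega)$, produces the macroscopic equations of (\ref{53}) with the coupling terms $(-1)^{i-1}\int_\Sigma\alpha(U_1-U_2)\,d\sigma$ and source $g_i$, while the Dirichlet condition $u_i=0$ on $\Gamma$ is inherited from Lemma \ref{l3}.
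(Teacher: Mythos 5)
Your proposal is correct and follows essentially the same route as the paper: the oscillating test functions $v_i=\varphi_i+\varepsilon\Phi_i(x,x/\varepsilon)$, the splitting of the interfacial integral into the macroscopic piece (handled by Lemma \ref{lem2}, with the zero-average hypothesis (\ref{3}) making $\alpha(\varphi_1-\varphi_2)$ admissible, exactly as in the paper's limit (\ref{57})) and the $\varepsilon$-weighted corrector piece (handled by the surface two-scale convergence (\ref{45}), as in (\ref{55})), followed by the density argument and the extraction of (\ref{53}) by letting the test components vanish in turn. Your additional remarks on the admissibility of $A_i^{T}(x/\varepsilon)\nabla v_i$ and on checking the interface scaling against (\ref{35}) are sound points of care that the paper leaves implicit, but they do not constitute a different argument.
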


\begin{proof}
Let $\varphi _{i}(x)\in \mathcal{D(}\Omega \mathcal{)},$ $\Phi _{i}\in
\mathcal{D(}\Omega ,\mathcal{C}_{per}^{\infty }(Y_{i})\mathcal{)\ }i=1,2$.
We take $v_{i}(x)=\varphi _{i}(x)+\varepsilon \Phi _{i}(x,\dfrac{x}{%
\varepsilon })$ in the weak formulation (\ref{30}). We have
\begin{eqnarray*}
a^{\varepsilon }((u_{1}^{\varepsilon },u_{2}^{\varepsilon }),(v_{1},v_{2}))
&=&I_{1}^{\varepsilon }+I_{2}^{\varepsilon }+I_{3}^{\varepsilon
}+I_{4}^{\varepsilon }+\varepsilon J_{1}^{\varepsilon }+\varepsilon
J_{2}^{\varepsilon } \\
L^{\varepsilon }((v_{1},v_{2})) &=&K_{1}^{\varepsilon }+K_{2}^{\varepsilon
}+\varepsilon L_{1}^{\varepsilon }+\varepsilon L_{2}^{\varepsilon }
\end{eqnarray*}%
where%
\begin{equation*}
I_{i}^{\varepsilon }=\int_{\Omega _{i}^{\varepsilon }}\left\{ A_{i}(\frac{x}{%
\varepsilon })\nabla u_{i}^{\varepsilon }(x)(\nabla \varphi _{i}(x)+\nabla
_{y}\Phi _{i}(x,\frac{x}{\varepsilon })+a_{i}(\frac{x}{\varepsilon }%
)u_{i}^{\varepsilon }(x)\varphi _{i}(x)\right\} dx,\ i=1,2
\end{equation*}%
\begin{equation*}
I_{3}^{\varepsilon }=\int_{\Sigma ^{\varepsilon }}\varepsilon \alpha (\frac{x%
}{\varepsilon })(u_{1}^{\varepsilon }(x)-u_{2}^{\varepsilon }(x))(\Phi
_{1}(x,\frac{x}{\varepsilon })-\Phi _{2}(x,\frac{x}{\varepsilon }))d\sigma
^{\varepsilon }(x),
\end{equation*}%
\begin{equation*}
J_{i}^{\varepsilon }=\int_{\Omega _{i}^{\varepsilon }}A_{i}(\frac{x}{%
\varepsilon })\nabla u_{i}^{\varepsilon }(x)\nabla _{x}\Phi _{i}(x,\frac{x}{%
\varepsilon })dx,\ i=1,2
\end{equation*}%
\begin{equation*}
I_{4}^{\varepsilon }=\int_{\Sigma ^{\varepsilon }}\alpha (\frac{x}{%
\varepsilon })(u_{1}^{\varepsilon }(x)-u_{2}^{\varepsilon }(x))(\varphi
_{1}(x)-\varphi _{2}(x))d\sigma ^{\varepsilon }(x);
\end{equation*}%
and
\begin{equation*}
K_{i}^{\varepsilon }=\int_{\Omega _{i}^{\varepsilon }}f_{i}^{\varepsilon
}(x)\varphi _{i}(x)dx;\ L_{i}^{\varepsilon }=\int_{\Omega _{i}^{\varepsilon
}}f_{i}^{\varepsilon }(x)\Phi _{i}(x,\frac{x}{\varepsilon })dx,\ i=1,2.
\end{equation*}%
Letting $\varepsilon \rightarrow 0$ and using (\ref{42})-(\ref{45}), we have%
\begin{equation}
\left.
\begin{array}{c}
\lim I_{i}^{\varepsilon }=\int_{\Omega \times Y_{i}}\left\{ A_{i}(y)(\nabla
u_{i}(x)+\nabla _{y}U_{i}(x,y))(\nabla \varphi _{i}(x)+\nabla _{y}\Phi
_{i}(x,y))dxdy\right\} + \\
\int_{\Omega \times Y_{i}}a_{i}(y)u_{i}(x)\varphi _{i}(x)dxdy,\ i=1,2%
\end{array}%
\right.  \label{54}
\end{equation}%
\begin{eqnarray}
\lim I_{3}^{\varepsilon } &=&\int_{\Omega \times \Sigma }\alpha
(y)(u_{1}(x)-u_{2}(x))(\Phi _{1}(x,y)-\Phi _{2}(x,y))dxd\sigma (y)
\label{55} \\
\lim K_{i}^{\varepsilon } &=&\int_{\Omega \times Y_{i}}f_{i}(x,y)\varphi
_{i}(xdxdy;\ i=1,2  \label{56}
\end{eqnarray}%
By virtue of Lemma \ref{lem2}
\begin{equation}
\lim I_{4}^{\varepsilon }=\int_{\Sigma ^{\varepsilon }}\alpha
(y)(U_{1}(x,y)-U_{2}(x,y))(\varphi _{1}(x)-\varphi _{2}(x))dxd\sigma (y);
\label{57}
\end{equation}%
On the other hand we see that
\begin{equation*}
|J_{i}^{\varepsilon }|+|L_{i}^{\varepsilon }|\leq C,\ i=1,2
\end{equation*}%
where $C$ is a positive constant independent of $\varepsilon $. Hence
\begin{equation}
\lim \varepsilon J_{i}^{\varepsilon }=\lim \varepsilon L_{i}^{\varepsilon
}=0,\ i=1,2  \label{59}
\end{equation}%
Now passing to the limit in (\ref{30}) and using (\ref{54})-(\ref{59}), we
obtain the two-scale system%
\begin{gather}
\sum_{i=1,2}\int_{\Omega \times Y_{i}}\left\{ A_{i}(\nabla u_{i}+\nabla
_{y}U_{i})(\nabla \varphi _{i}+\nabla _{y}\Phi _{i})+a_{i}u_{i}\varphi
_{i}\right\} dxdy+  \notag \\
\   \notag \\
+\int_{\Omega \times \Sigma }\left\{ \alpha (U_{1}-U_{2})(\varphi
_{1}-\varphi _{2})+\alpha (u_{1}-u_{2})(\Phi _{1}-\Phi _{2})\right\}
dxd\sigma (y)  \notag \\
\   \notag \\
=\sum_{i=1,2}\int_{\Omega \times Y_{i}}f_{i}\varphi _{i}dxdy  \label{60}
\end{gather}%
Now by density the formulation (\ref{60}) remains true for all $%
(v_{1},v_{2},V_{1},V_{2})$ in $(H_{0}^{1}(\Omega ))^{2}\times L^{2}(\Omega
,H_{per}^{1}(Y_{i})/\mathbb{R})\times L^{2}(\Omega ,H_{per}^{1}(Y_{2})/%
\mathbb{R})$. An integration by parts yields the two-scale homogenized
system (\ref{53}).
\end{proof}

\begin{proof}[Proof of Theorem \protect\ref{mr}]
First we take $\Phi _{1},\varphi _{2},\Phi _{2}\equiv 0$ in (\ref{60}). Then
integration by parts yields%
\begin{equation}
\left\{
\begin{array}{c}
-\mathrm{div}_{x}\left[ \int_{Y_{1}}A_{1}(y)(\nabla u_{1}(x)+\nabla
_{y}U_{1}(x,y)dy\right] +\int_{Y_{1}}a_{1}(y)u_{1}(x)dy \\
+\int_{\Sigma }\alpha (y)(U_{1}(x,y)-U_{2}(x,y)d\sigma
(y)=\int_{Y_{1}}f_{1}(x,y)dy\text{ in }\Omega , \\
u_{1}=0\text{ on }\Gamma .%
\end{array}%
\right.  \label{61}
\end{equation}%
Similarly we take $\varphi _{1},\Phi _{1},\Phi _{2}$ equal to zero and this
gives%
\begin{equation}
\left\{
\begin{array}{c}
-\mathrm{div}_{x}\left[ \int_{Y_{1}}A_{1}(y)(\nabla u_{1}(x)+\nabla
_{y}U_{1}(x,y)dy\right] +\int_{Y_{1}}a_{1}(y)u_{1}(x)dy \\
+\int_{\Sigma }\alpha (y)(U_{1}(x,y)-U_{2}(x,y)d\sigma
(y)=\int_{Y_{1}}f_{1}(x,y)dy\text{ in }\Omega , \\
\  \\
u_{1}=0\text{ on }\Gamma .%
\end{array}%
\right.  \label{62}
\end{equation}%
Now choosing $\varphi _{1},\varphi _{2},\Phi _{2}=0$ (resp. $\varphi
_{1},\varphi _{2},\Phi _{1}=0$) in (\ref{60}) gives after integration by
parts
\begin{equation}
\left\{
\begin{array}{l}
-\mathrm{div}_{y}(A_{i}(y)(\nabla u_{i}(x)+\nabla _{y}U_{i}(x,y))=0\text{ in
}\Omega \times Y_{i}, \\
\  \\
(A_{1}(y)(\nabla u_{1}(x)+\nabla _{y}U_{1}(x,y)))\cdot \nu _{1}(y) \\
\ \ \ \ \ \ \ \ \ \ \ \ =(A_{2}(y)(\nabla u_{2}(x)+\nabla
_{y}U_{2}(x,y)))\cdot \nu _{1}(y)\text{ on }\Omega \times \Sigma , \\
\  \\
(A_{1}(y)(\nabla u_{1}(x)+\nabla _{y}U_{1}(x,y))\cdot \nu _{1}(y)+\alpha
(y)(u_{1}(x)-u_{2}(x))=0\text{ on }\Omega \times \Sigma , \\
\  \\
y\longmapsto U_{1}(x,y),U_{2}(x,y)\text{ $Y$-periodic.}%
\end{array}%
\right.  \label{h3}
\end{equation}
\end{proof}

Next we shall decouple the problem (\ref{h3}), that is eliminating the
unknowns $U_{1},U_{2}$ from the system (\ref{53}). The linearity of the
problem and the fact that $u_{i}$ do not depend on the fast variable $y$
enable us to put
\begin{equation}
U_{i}(x,y)=\sum_{k=1}^{n}\chi _{i}^{k}(y)\frac{\partial u_{i}}{\partial x_{k}%
}\left( x\right) +\gamma _{i}(y)(u_{1}\left( x\right) -u_{2}\left( x\right)
)+\tilde{u}_{i}\left( x\right) ,\ i=1,2  \label{63}
\end{equation}

Then inserting (\ref{63}) into (\ref{61})and (\ref{62}) yields the equations
(\ref{25})-(\ref{27})

Thus we have proved the Theorem \ref{mr}\hfill

\begin{remark}
It is easy to see that if $A_{i}$ are symmetric for all $i$ then $B_{i}=0$.
\end{remark}

\noindent {\ \textbf{Acknowledgement:\ }This\ work\ is\ part\ of\ the\
Project\ CNEPRU\ N%
%TCIMACRO{\U{b0}}%
%BeginExpansion
${{}^\circ}$%
%EndExpansion
\ B1602-09-05\ of\ the\ MESRS\ Algerian\ office.\ The\ author\ is\ grateful\
to\ the\ high\ education\ Algerian\ office\ for\ the\ financial\ support.\ }


\begin{thebibliography}{99}
\bibitem{ainthesis} A. Ainouz, Homogenization of a Wentzell type-like
problems in elasticity, Thesis, USTHB, Algiers, 1997.

\bibitem{ain} A. Ainouz, two-scale Homogenization of the solution to a Robin
Problem in perforated domain. Appli. Math. Sci. (Accepted for publication)

\bibitem{al} G. Allaire, Homogenization and two-scale convergence, SIAM J.
Math. Anal. 23 6 (1992), 1482-1518.

\bibitem{adh} G. Allaire, A. Damlamian and U. Hornung, two-scale convergence
on periodic surfaces and applications, In Proceedings of the International
Conference on Mathematical Modelling of Flow through Porous Media (May
1995), A. Bourgeat et al. eds., pp.15-25, World Scientific Pub., Singapore
(1996).

\bibitem{ae} J. L. Auriault and H. I. Ene, Macroscopic modelling of heat
transfer in composites with interfacial thermal barrier, Int. J. Heat Mass
Transfer 37 (1994), 2885-2892.

\bibitem{ar} J. L. Auriault and P. Royer, Homogenization of compressible
fluid flow in porous media with interfacial thermal barrier, Arch. Mech. 51
(1999), 469-486

\bibitem{blp} A. Bensoussan, J. L. Lions and G. Papanicolaou, Asymptotic
Analysis for Periodic Structures, North Holland, Amsterdam, 1978.

\bibitem{ben} I. Benveniste, Effective thermal conductivity in a composite
with a thermal contact resistance between the constituents : Non dilute
case, J. Appli. Phys. 61 (1987), 2840-2843.

\bibitem{cp} E. Canon and J. N. Pernin, Homogenization of diffusion with
interfacial barrier, C. R.Acad. Sci. Paris S\'{e}r. I Math. 325 (1997),
123-126.

\bibitem{cj} H. S. Carslaw and J. C. Jaeger, Conduction of Heat in Solids,
Clarendon Press, Oxford, 2nd edition, 1959.

\bibitem{hummel} H. K. Hummel, Homogenization for heat transfer in
polycrystals with interfacial resistances, Applied Analysis 75 (2000),
403-424.

\bibitem{kh} S. E. Kholodovskii, Flow through a porous stratum with annular
inhomogeneous anisotropic zones, fractures and barriers, Sov. Phy. Dokl. 36
(1991), 216-217.

\bibitem{lnw} D. Lukkassen, G. Nguetseng and P. Wall, Two-scale convergence,
Int. J. of Pure and Appl. Math. 2, 1, 35-86, 2002.

\bibitem{mon} S. Monsurr\'{o}, Homogenization of a two-component composite
with interfacial thermal barrier, Advances in Math. Sci. and Appl. 13
(2003), 44-63.

\bibitem{nr} M. Neuss-Radu, Some extensions of two-scale convergence. C.
R.Acad. Sci. Paris S\'{e}r. I Math. 322 (1996), 899--904.

\bibitem{ngu} G. Nguetseng, A general convergence result for a functional
related to the theory of homogenization, SIAM J. Math. Anal. 20 (1989),
608-623.

\bibitem{per} J. N. Pernin, Homog\'{e}n\'{e}isation d'un probl\`{e}me de
diffusion en milieu composite \`{a} deux composites, C. R.Acad. Sci. Paris S%
\'{e}r. I Math 321 (1995), 949-952.

\bibitem{san} E.\ Sanchez-Palencia, Non-Homogeneous Media and Vibration
Theory, Lecture Notes in Physics 127, 1980.

\bibitem{tar} L. Tartar, Cours Peccot, Coll\`{e}ge de France, 1977.
\end{thebibliography}
\end{document}